%

\documentclass{jktr}

\usepackage{float} 

\newcommand*{\QEDA}{\hfill\ensuremath{\square}}%

\usepackage{xcolor}

\usepackage{verbatim}
\usepackage{float} 

\usepackage{enumitem} 

\usepackage{afterpage} 

\usepackage{subfigure}





\newcommand{\Z}{\mathbb{Z}}

\newcommand{\R}{\mathbb{R}}

\newcommand{\cR}{\mathcal{R}}

\newcommand{\eps}{\varepsilon}

\renewcommand{\d}{\partial}

\newcommand{\sign}{\operatorname{sign}}

\renewcommand{\Im}{\operatorname{Im}}
\renewcommand{\int}{\operatorname{int}}

\renewcommand{\v}{\mathbf{v}}

\newcommand{\bo}{\mathbf{0}}
\newcommand{\0}{\{0\}}

\renewcommand{\t}{\{t\}}

\renewcommand{\mod}{\text{ mod }}

\newcommand{\tw}{\widetilde}
\newcommand{\tl}{\widetilde{\lambda}}

\newcommand{\tY}{\tw{Y}}
\newcommand{\td}{\tw{\d}}

\newcommand{\te}{\tw{e}}
\newcommand{\tA}{\tw{A}}

\newcommand{\stimes}{\hskip-0.1cm\times\hskip-0.1cm}

\newcounter{labelflag} \setcounter{labelflag}{0}

\newcommand{\Label}[1]{
                       \ifnum\thelabelflag=1
                          \ifmmode
                             \makebox[0in][l]{\qquad\fbox{\rm#1}}
                          \else
                             \marginpar{\vspace{0.7\baselineskip}
                                        \hspace{-1.1\textwidth}
                                        \fbox{\rm#1}}
                          \fi
                       \fi
                       \label{#1}
                      }
\newcommand{\ba}{\begin{align}}
\newcommand{\ea}{\end{align}}

\newlength{\jump}

\newcounter{mysub}
\setcounter{mysub}{0}

\makeatletter
\newcommand{\manuallabel}[2]{\def\@currentlabel{#2}\label{#1}}
\makeatother

\newcommand{\mycomment}[1]{}

\begin{document}

\markboth{Ash Lightfoot}
{The Schneiderman-Teichner invariant applied to immersions arising from link maps in $S^4$ }

\catchline{}{}{}{}{}

\title{The Schneiderman-Teichner invariant applied to immersions arising from link maps in $S^4$ }

\author{Ash Lightfoot}

\address{Indiana University\\ Rawles Hall \\  Bloomington IN 47405}

\maketitle

\begin{abstract}
It is shown that Schneiderman and Teichner's invariant $\tau$ detects the example due to Kirk of a link map $f:S^2_+\cup S^2_-\to S^4$ for which  the restricted map $f|S^2_+:S^2_+\to S^4-f(S^2_-)$ has vanishing Wall self-intersection but is not homotopic to an embedding.
\end{abstract}

\keywords{link homotopy, immersed 2-sphere, intersection number, whitney trick}

\ccode{Mathematics Subject Classification 2010: 57N35; 57Q45}

\addtolength{\intextsep}{-0.4cm}
{\section{Introduction}
A link map  is a map from a union of spheres into another sphere with pairwise disjoint images. The equivalence relation placed on the set of link maps is that of link homotopy: two link maps are link homotopic if they are homotopic through link maps. We restrict our attention to link maps of the form $S^2\cup S^2\to S^4$, whose study was initiated when Fenn and Rolfsen (\cite{FR}) produced the first non-trivial example (that is, a link map that is not link homotopic to the trivial link)  and a link homotopy invariant to detect it. Their idea was generalized by Kirk (\cite{Ki1}, \cite{Ki2}) to define his $\sigma$-invariant, which takes values in $\Z[t]\oplus \Z[t]$ and gives necessary conditions for a link map to be equivalent to the trivial link. It is still an open question whether $\sigma$ is a complete obstruction. (The counterexamples constructed in \cite{Li} were found to be in error by Pilz (\cite{Pilz}). The author is grateful to Rob Schneiderman for pointing this out and  to Uwe Kaiser for supplying a copy of \cite{Pilz}.)

Consider a link map  $f:S^2_+\cup S^2_-\to S^4$ with $\sigma_+(f)=0$, and suppose furthermore that each component $f^{\pm}:=f|{S^2_{\pm}}: S^2_{\pm}\to S^4-f(S^2_{\mp})$ is self-transverse and $\pi_1(S^4-f(S^2_-))\cong \Z$. The double points of $f^+$ may then be equipped with (framed, immersed) Whitney disks in $S^4-f(S^2_-)$. Li (\cite{Li}) defined a $\Z_2$-valued homotopy invariant $\omega_+(f)$ of $f^+$ that counts weighted intersections between $\Im f^{\pm}$ and these Whitney disks so as to obstruct homotoping $f^+$ to an embedding.  On the other hand, in \cite{ST}  Schneiderman and Teichner introduced a homotopy invariant $\tau$ that takes as input a more general map $g:S^2\to X^4$ 
that has vanishing Wall obstruction $\mu(g)$ (c.f. \cite{W}), and obstructs homotoping $g$ to an embedding.  It takes values in a quotient of the group $\Z[\pi_1(X)\times \pi_1(X)]$, and when applied to the above situation $\tau(f^{+})$ counts the same intersections as $\omega_{+}(f)$, but assigns to each intersection point a weight in the group ring $\Z[s^{\pm 1},t^{\pm 1}]$.

\mycomment{
The question of whether $\sigma$ is a complete obstruction was answered negatively by Li (\cite{Li}), who defined a $\Z_2\oplus\Z_2$-valued invariant $\omega$ of link maps with vanishing $\sigma$-invariant and used this to produce several different link homotopy
classes of such link maps.

The link homotopy invariant $\omega$ applied to a link map $f:S^2_+\cup S^2_-\to S^4$ has two components $\omega=(\omega_+,\omega_-)$, where $\omega_{\pm}$ measures the obstruction to homotoping the restricted map $f^{\pm}:=f|{S^2_{\pm}}: S^2_{\pm}\to S^4-f(S^2_{\mp})$ to an embedding. After performing a link homotopy of $f$ to ensure that $\pi_1(S^4-f(S^2_{\mp}))\cong \Z$), this is done by counting modulo 2 intersections between $\Im f^{\pm}$ and its Whitney discs, whose existence is guaranteed by the vanishing of $\sigma_{\pm}(f)$. In \cite{ST}, Schneiderman and Teichner introduced an invariant $\tau$ that takes as input an immersed 2-sphere in a general 4-manifold $X$ with vanishing Wall obstruction (see \cite{W} for details) and whose vanishing is a necessary condition to embedding. It takes values in a quotient of the group $\Z[\pi_1(X)\times \pi_1(X)]$. When applied to the above situation with $\pi_1(S^4-f(S^2_{\mp}))\cong \Z$, the invariant $\tau(f^{+})$ counts the same intersections as $\omega_{+}$, but assigns to each intersection point a weight in the group ring $\Z[s^{\pm 1},t^{\pm 1}]$.
}%
\mycomment{
{\color{blue}%
It is an open question whether $\sigma$ is a complete obstruction\footnote{\color{blue}The previous version of this paper cited (but did not use) an incorrect result of Li (\cite{Li}); namely, the construction of a link map $f$ with $\sigma(f)=(0,0)$ that is not link homotopic to an embedding. The error in that paper was corrected by Pilz\cite{Pilz} (Ch. 4). The author would like to thank Rob Schneiderman and Uwe Kaiser for, respectively, alerting him to the error and supplying a copy of \cite{Pilz}.}.
}

}
Link maps give rise to further examples of the failure of Whitney's trick in dimension 4 in the following way. In \cite{Ki2}, Kirk showed that the full $\sigma$-invariant is an obstruction to embedding each component and constructed a simple example of a link map $f$ with $\sigma(f)=(0,t^2-4t+3)$. Consequently, (after a small cusp homotopy) the restricted map $f^+:S^2_+\to S^4-f(S^2_-)$ has vanishing Wall obstruction $\mu(f^+)$ but is not homotopic to an embedding. We give a new proof of the latter by computing $\tau(f^+)\neq 0$.

\begin{theorem}\label{thm:main}
For the link map $f:S^2_+\cup S^2_-\to S^4$ constructed in \cite{Ki2} with $\sigma(f)=(0,t^2-4t+3)$, the restricted map $f^+:S^2_+\to S^4-f(S^2_-)$ has vanishing Wall self-intersection $\mu(f^+)$ but nonzero Schneiderman-Teichner invariant $\tau(f^+)$.
\end{theorem}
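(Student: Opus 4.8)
The plan is to compute $\tau(f^+)$ directly from Kirk's explicit picture of the link map $f$ and verify it is nonzero, while the vanishing of $\mu(f^+)$ is essentially immediate from the fact that (after a cusp homotopy) $f^+$ has exactly two double points of opposite sign whose group elements agree. First I would recall the construction from \cite{Ki2}: $f^-$ is a standard unknotted sphere so that $S^4 - f(S^2_-) \simeq S^1$ and $\pi_1(S^4 - f(S^2_-)) \cong \Z = \langle t \rangle$, while $f^+$ is built by taking an unknot, adding a finger move (or Whitney move in reverse) so that the double point data realizes the polynomial $t^2 - 4t + 3 = (t-1)(t-3)$ as $\sigma_-(f)$. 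The key geometric input is that this $\sigma$-value records algebraic intersections between the Whitney disks for the self-intersections of $f^+$ and the sphere $f(S^2_-)$; I would make these Whitney disks $W_1, W_2$ explicit, framed and immersed in $S^4 - f(S^2_-)$, and identify the intersection points $W_i \cap f^+(S^2_+)$ that contribute to $\tau$.

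The computation of $\tau(f^+)$ then proceeds by the recipe of \cite{ST}: for each Whitney disk $W_i$ pairing a canceling pair of double points of $f^+$, and for each intersection point $p \in W_i \cap f^+(S^2_+)$, one records a group element $(g_p, h_p) \in \pi_1 \times \pi_1$ built from sheet-change loops, with an appropriate sign, and sums over all $p$ and all $i$, taking the image in the quotient group in which $\tau$ lives (quotienting by the relations coming from framing/cusp indeterminacy, the $S$-relation, the symmetry relation, etc.). Here $\pi_1 = \pi_1(S^4 - f(S^2_-)) \cong \Z$, but crucially one tracks two generators — the "$s$" recording winding coming from the Whitney disk's boundary arcs on $f^+$ and the "$t$" recording winding around $f(S^2_-)$ — so the weight of each intersection lands in $\Z[s^{\pm 1}, t^{\pm 1}]$. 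I would carefully read off, from Kirk's diagram, which monomials $s^a t^b$ arise; the expectation (matching $\sigma_-(f) = t^2 - 4t + 3$ and Li's $\omega_+$) is that the total is something like $\pm s(t^2 - 4t + 3)$ or a similar polynomial that is manifestly nonzero in the relevant quotient.

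The main obstacle is the last point: showing that the computed element survives in the quotient of $\Z[\pi_1 \times \pi_1]$ by all of Schneiderman–Teichner's indeterminacy relations, rather than being killed by them. This requires pinning down exactly which relations apply when $\pi_1 \cong \Z$ (so the group ring is Laurent polynomials in commuting variables), and checking that the polynomial $t^2 - 4t + 3$, suitably interpreted with an $s$-variable, is not in the subgroup generated by the framing relation, the $|p| \mapsto |p|^{-1}$-type symmetry, and the relation identifying a double-point loop contribution with its "conjugate." I would argue that because $(t-1)(t-3)$ has $t=1$ as a root but is not itself of the form $h - h^{-1}$ or a sum of such terms (and the $s$-variable is not subject to the same symmetrization), it represents a nonzero class; a parallel observation already shows $\mu(f^+) = 0$ since $\mu$ is computed from the same double points but with the Whitney-disk intersections discarded, leaving only the canceling pair. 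I would also need to double-check orientation and framing conventions so the signs of the two contributing families of intersection points do not conspire to cancel — this bookkeeping, rather than any deep idea, is where the real work lies, and comparing against Kirk's $\sigma$ computation and Li's $\omega_+$ computation provides the needed consistency check.
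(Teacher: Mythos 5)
There is a genuine gap, and it starts with your description of the example: in Kirk's link map with $\sigma(f)=(0,t^2-4t+3)$, the second component $f^-$ is \emph{not} a standard unknotted sphere. The nonzero value $\sigma_-(f)=t^2-4t+3$ is computed from the self-intersections of $f^-$ itself (in this representative, four double points of one sign with $n$-multiplicity $1$ and one of the opposite sign with $n$-multiplicity $2$), so $X=S^4-\nu f(S^2_-)$ is far from being homotopy equivalent to $S^1$: it has $\pi_1(X)\cong\Z$ but, as the paper shows via an explicit Kirby diagram (one $0$-handle, one $1$-handle, five $2$-handles), $\pi_2(X)\cong(\Z[\Z])^5$. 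Your plan silently assumes $\pi_2$ plays no role, and this kills the final step. The indeterminacy subgroup $\cR$ for $\tau$ contains, besides the framing/symmetry relations you list, the relation $s^k\lambda(f(S^2_+),A)=\omega_2(A)s^k$ for \emph{every} $A\in\pi_2(X)$; with $\pi_2(X)$ this large, showing that your computed element survives in $\Z[s^{\pm1},t^{\pm1}]/\cR$ requires constructing a $\Z[\Z]$-basis of immersed spheres $A_1,\dots,A_5$ (capped-off surgered tori on the $2$-handle attaching circles) and computing the Wall intersections $\lambda(F^+,A_i)$. That computation is the technical heart of the paper: it is what allows the homomorphism $\Phi:\Z[\Z\times\Z]\to\Z_2[\Z_2\times\Z_2]$ to descend to the quotient, after which one evaluates $\Phi(\tau(f^+))=1+t\neq 0$ from five explicit framed Whitney disks (not two; $f^+$ has five pairs of double points in this representative, and also ten, not two, double points, so ``$\mu(f^+)=0$'' follows from the signed pairing with equal group elements rather than from there being a single cancelling pair).

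A secondary but related confusion: $\sigma_-(f)$ does not record intersections of the Whitney disks for $f^+$ with $f(S^2_-)$ — such intersections cannot occur, since the Whitney disks live in $X=S^4-\nu f(S^2_-)$ — and there is no reason to expect $\tau(f^+)$ to be ``$\pm s(t^2-4t+3)$''; the paper's answer, after applying $\Phi$, is $1+t$ in $\Z_2[\Z_2\times\Z_2]$. Your heuristic argument that $(t-1)(t-3)$ ``is not of the form $h-h^{-1}$'' addresses only the symmetry-type relations and does not engage the $\lambda$--$\omega_2$ relation at all, so even granting your guessed value of $\tau$, the non-vanishing claim would remain unproven. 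To repair the proposal you would need, at minimum, the handle decomposition of $X$, the computation of $\pi_2(X)$ with an explicit geometric basis, the mod-$2$ Wall intersection computation $\lambda(F^+,A_i)\equiv 0$, and the explicit five Whitney disks with their primary and secondary group elements.
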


\section{Preliminaries}\label{sec:prelim}

In this section we recall the definitions of the relevant invariants as they pertain to our particular setting. Assume all manifolds are oriented. Let $f:S^2_+\cup S^2_-\to S^4$ be a link map whose restriction $f^{\pm}:S^2_{\pm}\to S^4-f(S^2_{\mp})$ to each component is a  self-transverse immersion.  Let $X_-=S^4-\nu f(S^2_-)$, and choose a generator $t$ for $H_1(X_-)\cong \Z$. For each double point $p$ of $f^+$, the image under $f^+$ of an arc in $S^2_+$ connecting the two preimages of $p$ is a loop representing $t^{n(p)}\in H_1(X_-)$ for some  $n(p)\in \Z$. The absolute value of $n(p)$ is well-defined, so one may define
\[
    \sigma_+(f) = \sum_{p} \sign(p) (t^{|n(p)|}-1)\in \Z[t],
\]
where the sum is over all double points of $f^+$. Reversing the roles of $f^+$ and $f^-$ one similarly defines $\sigma_-(f)$. The pair $\sigma(f)=(\sigma_+(f),\sigma_-(f))$ is referred to as the \emph{full} $\sigma$-invariant of $f$.

Suppose that $\pi_1(X)\cong \Z$, with generator $t$, and write $X=X_-$. After performing a number of local homotopies if necessary, $f^+$ may be assumed to have vanishing self-intersection number so that if $\sigma_+(f)=0$ then the double points of $f^+$ can be decomposed  into pairs $\{p_i^{\pm 1}\}_{i=1}^k$  such that $\sign(p_i^{\pm 1})=\pm 1$ and  $n(p_i^+)=n(p_i^-)$. Denote the two preimages of $p_i^{+}$ (resp. $p_i^{-}$) in $S^2_+$ by $x_i^{+}$ (resp. $x_i^{-}$) and $y_{i}^{+}$ (resp. $y_{i}^{-}$). Choose mutually disjoint embedded arcs in $S^2_+$ connecting $x_i^{+}$ to $y_i^{+}$ and $x_i^{+}$ to $y_i^{-}$. The image under $f^+$ of the union of these two arcs is a nulhomotopic loop $\gamma_i$ in $X$, which we call a \emph{Whitney circle} for the pair $\{p_i^+,p_i^-\}$. After a regular homotopy of $f^+$ the Whitney circles may be assumed to be embedded and mutually disjoint. Furthermore, it can be arranged that each Whitney circle $\gamma_i$ bounds an immersed 2-disc $W_i$ in $X$ whose interior is transverse to $f(S^2_+)$. We call $W_i$ a \emph{Whitney disc} for the pair $\{p_i^+,p_i^-\}$.

Choose a preferred arc $\alpha_i$ of $\d W_i\subset f(S^2_+)$  that runs between $p^+_i$ and $p^-_i$ and call this the \emph{positive arc} of $\d W_i=\gamma_i$. The arc $\beta_i$ of $\d W_i$ lying in the other sheet will be called the \emph{negative arc}. We will  refer to a neighborhood in $f(S^2_+)$ of $\alpha_i$ (resp. $\beta_i$)  as the \emph{positive} (resp. \emph{negative}) \emph{sheet} of $f(S^2_+)$ near $W_i$.

The loop based at $p_i^+$ that changes from the negative sheet to the positive sheet  determines a group element $t^{n_i}\in \pi_1(X)$, which we call the \emph{primary group element} for $W_i$ (here, as in elsewhere, we need not keep track of a basepoint for $X$ as $\pi_1(X)$ is abelian). In \cite{Li}, the non-negative integer $|n_i|=|n(p_i^{\pm})|$ is called the $n$-\emph{multiplicity} of $p_i^{\pm}$. Now, to each point $x\in \int W_i \cap f(S^2_+)$, the loop based at the basepoint of $f(S^2_+)$ that first goes along $f(S^2_+)$ to $x$, then along $W_i$ to the positive arc of $W_i$, then back to the basepoint of $f(S^2_+)$ along $f(S^2_+)$, determines the \emph{secondary group element} $t^{m_x}\in \pi_1(X)$ for $W_i$ corresponding to $x$. In \cite{Li}, the absolute value of the integer $m_x$ is called the $m$-\emph{multiplicity} of intersection point $x$.

Orient $\d W_i$ from $p_i^-$ to $p_i^+$ along the positive arc and back to $p_i^-$ along the negative arc. The positive tangent to $\d W_i$ together with an outward pointing second vector orient $W_i$.
Since the positive and negative sheets meet transversely at $p_i^\pm$, there are a pair of smooth vector fields $\v_1, \v_2$ on $\d W_i$ such that $\v_1$ is tangent to $f(S^2_+)$ along $\alpha_i$ and normal to $f(S^2_+)$ along $\beta_i$, while $\v_2$ is normal to $f(S^2_+)$ along $\alpha_i$ and tangent to $f(S^2_+)$ along $\beta_i$. Such a pair defines a normal framing of $W_i$ on the boundary. We say that $\{\v_1,\v_2\}$ is a \emph{correct framing} of $W_i$, and that $W_i$ is \emph{framed}, if the pair extends to a normal framing of $W_i$.

We use the notation $s^nt^m:=(t^n,t^m)$ so as to write $\Z[\pi_1(X)\times \pi_1(X)] = \Z[s^{\pm 1},t^{\pm 1}]$, and define
\[
    I(W_i)=\sum_{x\in \int W_i\cap f(S^+)} \sign(x) s^{n_i}t^{n_x} \in \Z[s^{\pm 1},t^{\pm 1}].
\]
Suppose that the Whitney discs $\{W_i\}_{i=1}^k$ are each correctly framed. This assumption along with all the above hypotheses imply that the  Schneiderman and Teichner invariant $\tau$ applied to $f^+$ is given by
\[
    \tau(f^+) = \sum_{i=1}^k I(W_i) \in \Z[s^{\pm 1},t^{\pm 1}]/\cR.
\]
The relations $\cR$ are additively generated by the equations
\numberwithin{equation}{section}
\begin{align}
    s^{k}t^{k} - s^{k} &= 0, \; \; k\in \Z,\label{reln1}\\
    s^{k}t^{l}+s^{-k}t^{l-k} &= 0, \; \; k,l\in \Z,\label{reln2}\\
    s^{k}t^{l}+s^{l}t^{k}&=0,  \; \; k,l\in \Z,\label{reln3}\\
    s^{k}\lambda(f(S^2_+),A)&=\omega_2(A)s^k,  \; \; k\in \Z,\, A\in \pi_2(X),\label{reln4}
\end{align}
where $\lambda: \pi_2(X)\times \pi_2(X)\to \Z[t^{\pm 1}]$ is the Wall intersection form (\cite{W}) and $\omega_2:\pi_2(X)\to \Z_2$ is the second Stiefel-Whitney class. By Theorem 2 of \cite{ST}, if $f^+$ is homotopic to an embedding then $\tau(f^+)=0$.

Let $1,t$ denote generators for $\Z_2\times \Z_2$, written multiplicatively, and write $\bar{n}:= n\mod 2$. Define a ring homomorphism $\Phi: \Z[\Z\times \Z] \to \Z_2[\Z_2\times \Z_2]$ by sending
\[
    as^kt^l\mapsto \begin{cases} \bar{a} \; \text{ if $k=0=l\text{ mod }2,$}\\
    \bar{a}t \; \text{ otherwise,}    \end{cases}
\]
for any $a, k, l\in \Z$,  and extending by linearity. Note that for a monomial $at^k \in \Z[t^{\pm 1}]$, we have $\Phi(at^k)=\bar{a}t^{\bar{k}}$.

For our example  we will see that $\Phi$ induces a well-defined homomorphism $\Z[s^{\pm 1},t^{\pm 1}]/\cR\to \Z_2[\Z_2\times \Z_2]$.

\begin{remark}
 We may equivalently define $\Phi(as^nt^m) = \bar{a}t^{\overline{n+nm+m}}$. Thus, if $\varphi:\Z_2[\Z_2\times \Z_2]\to \Z_2$ is defined by $t\mapsto 1$, $1\mapsto 0$, then $\varphi\circ \Phi(I(W_i)) \in \Z_2$ equals that which Li refers to as $I(W_i)$ in \cite{Li}.  It follows that Li's $\omega_+$-invariant may be written
\[
    \omega_+(f) = \varphi\circ \Phi(\tau(f^+))\in \Z_2.
\]
\end{remark}

\section{The Example}\label{sec:example}

We recall the ``moving picture'' method  to construct link maps  in $S^4$ (c.f. \cite{Ki2}, \cite{Li}). One gives a sequence of pictures to illustrate a regular link homotopy in $\R^3$ from the 2-component unlink to itself. A link map $f:S^2_+\cup S^2_-\to S^4$ for which each component is immersed  is constructed by taking the trace of this homotopy and ``capping off'' with two pairs of embedded 2-discs. Using this method, Figs.  \ref{fig:movie}.1-\ref{fig:movie-last} in Section \ref{sec:figs} (where, for brevity, we interpret Figs. \ref{fig:movie}.1-\ref{fig:movie}.4 as depicting four 2-component links, each with one component the same dotted circle) describe a link map $f$ for which the restricted map $f^+:=f|{S^2_+}$ has 5 pairs of double points, paired with opposite signs, such that the first pair is of $n$-multiplicity $0$ and the latter 4 pairs are of $n$-multiplicity $1$. On the other hand, the map $f^-:=f|{S^2_-}$ has 4 doubles points of the same sign and with $n$-multiplicity 1, and one double point (of the opposite sign) with $n$-multiplicity $2$. For our purposes the actual sign of each double point does not need to be specified. It follows that $\sigma(f)=(0,t^2-4t+3)$ (up to sign) and, since $f^+$ has zero self-intersection number, $\mu(f^+)=0$ (c.f. \cite{Ki2} Section 6). To show that $\tau(f^+)\neq 0$ we make this description more explicit so that, using the methods of Section 6.2 in \cite{GS}, we may obtain a handlebody decomposition of $X=S^4- \nu f(S^2_-)$ and a description of $f(S^2_+)$ relative to this decomposition.

In each picture of Figs. \ref{fig:movie}.1-\ref{fig:movie-last} (which we henceforth refer to simply as Fig. \ref{fig:movie}), denote the dotted component and the other component, which we view as ``level circles'' of the components $f(S^2_-)$ and $f(S^2_+)$, by $C^-$ and $C^+$, respectively. Notice that if one removes from $f(S^2_-)$ the 2-disc bound by $C^-$ in Fig. \ref{fig:movie}.1 used in capping off, and let $F^-$ denote the resulting immersed 2-disc in $S^4-f(S^2_+)$, we have $X = D^4 - \nu F^-$. Identify $D^4 = [-1,2]\times D^3$. As an outline of what follows, the immersed 2-sphere $F^{+}:=f(S^2_+)\subset (-1,2)\times D^3$ is constructed by attaching a 0-handle and a 2-handle to the trace in $[0,1]\times D^3$ of a regular homotopy of an unknot (pictured in Fig. \ref{fig:movie} as family of undotted components). The properly immersed 2-disc $F^{-}\subset (-1,2]\times D^3$ is constructed by attaching to a 0-handle the trace in $[0,1]\times D^3$ of a regular homotopy of an unknot (the family of dotted components in Fig. \ref{fig:movie}. This is done in such a way that $F^+$ and $F^-$ are disjoint in $[-1,2]\times D^3$.

For each $t\in [-1,2]$, let $X_t=X\cap ([-1,t]\times D^3)$, $F^{\pm}_t = F^{\pm}\cap ([-1,t]\times D^3)$ and let $\d_t X = X \cap (\{t\}\times D^3)$; by the construction of $X$ and $F^{-}$ we shall have $X_t=([-1,t]\times D^3) - \nu F^-_t$.

For $t\in [0,1]$, the (perhaps singular) knots $\d F^{\pm}_t=F^{\pm}\cap (\{t\}\times D^3)$ appear in Fig. \ref{fig:movie} for various values of $t$ as the disjoint knots $C^{\pm}$, respectively. We take $F_0^{+}$ and $F_0^{-}$ to be disjoint, properly embedded 2-discs in $(-1,0]\times D^3$ with boundaries the unknots in $\0\times D^3$ appearing as the components $C^+$ and $C^-$ in Fig. \ref{fig:movie}.1, respectively. Then $F_0^{+}$ is properly embedded  in $X_0=([-1,0]\times D^3)-\nu(F_0^-)$, which is the 1-handlebody whose Kirby diagram is given by Fig. \ref{fig:movie}.1 (if one ignores the component $C^+$ and interprets the dotted circle in the usual way).

The link homotopy of the unlink $C^+\cup C^-$ to itself appearing in Figs. \ref{fig:movie}.1-\ref{fig:movie-last} decomposes into a sequence of regular link homotopies alternating between the following two types:\vspace{-0.2cm}
\begin{enumerate}[label=(\Roman{*}), ref=(\Roman{*})]
    \item $C^+$ undergoes a regular homotopy in the complement of $C^-$ (which remains fixed), or \label{it:reg-plus}
    \item $C^-$ undergoes a regular homotopy in the complement of $C^+$ (which remains fixed), during which two arcs of $C^-$ change crossing while the rest of $C^-$ is fixed.\label{it:crossing}
\end{enumerate}

We inductively describe how $F_1^{\pm}\subset (-1,0]\times D^3$ extends to $F_2^{\pm}\subset [-1,2]\times D^3$, and hence how $X_1$ extends to $X_2$, according to when these moves occur in Figs. \ref{fig:movie}.1 to \ref{fig:movie-last}.
Suppose $t_0\in [0,1]$ is such that $F_{t_0}$ has been defined by applying the procedure according to Figs. \ref{fig:movie}.1 to \ref{fig:movie}.i for some $i\in \{1,\ldots, \ref{fig:movie-secondlast}\}$. Suppose the next figure (Fig. \ref{fig:movie}.(i+1)) is obtained from Fig. \ref{fig:movie}.i by performing move:

\ref{it:reg-plus}
This defines a regular homotopy of $C^+:=\d F^+_{t_0}$ to a knot $\hat{C}^+$, say, in the complement in a 3-ball of $C^-:=\d F^-_{t_0}$. Let $t_1\in (t_0,1)$ and parameterize the regular  homotopy by $G:S^1\times [t_0,t_1]\looparrowright D^3-\nu(C^-)$, where $G_{t_0}=C^+$ and $G_{t_1}=\hat{C}^+$. Since $C^-$ is fixed over $[t_0,t_1]$, $F_{t_1}^-$ is obtained from $F_{t_0}^-$ by attaching a collar in $[t_0,t_1]\times D^3$. That is,
\[
F_{t_1}^{-}=F^-_{t_0}\underset{\{t_0\}\times C^-}{\cup} ([t_0,t_1]\stimes \d F^-_{t_0})\subset (-1,t_1]\times D^3,
\]
so that $X_{t_1}=X_{t_0}\underset{\{t_0\}\times \d_{t_0}X}{\cup} ([t_0,t_1]\stimes \d_{t_0}X)$, while
\[
    F_{t_1}^+ =  F_{t_0}^+\underset{\{t_0\}\times C^+}{\cup} \bigl([t_0,t_1]\times G(S^1\stimes [t_0,t_1])\bigr)\subset (-1,t_1]\times D^3.
\]
Note that $F_{t_1}^+\subset X_{t_1}$, and $(\d_{t_1} X, \d F^+_{t_1})$ is described with respect to Fig. \ref{fig:movie}.(i+1) as $(D^3-\nu(C^-), \hat{C}^+)$. Moreover, $X_{t_1}$ is clearly diffeomorphic to $X_{t_0}$ (we will denote this $X_{t_1}\approx X_{t_0}$).

\ref{it:crossing}
This defines a regular homotopy $C^-:=\d F^-_{t_0}$ in the complement of $C^+$ to a knot $\hat{C}^-$, during which a single double point $q$, say, of $C^-$ appears. Let $t_1\in (t_0,1)$ and parameterize the  regular homotopy by $G:S^1\times [t_0,t_1]\looparrowright D^3-\nu(C^+)$, where $G_{t_0}=C^-$ and $G_{t_1}=\hat{C}^-$. Then
\[
F_{t_1}^{+}=F^+_{t_0}\underset{\{t_0\}\times C^+}{\cup} ([t_0,t_1]\stimes \d F^+_{t_0})\subset (-1,t_1]\times D^3,
\]
while
\[
    F_{t_1}^- =  F_{t_0}^-\underset{\{t_0\}\times C^-}{\cup} \bigl([t_0,t_1]\times G(S^1\stimes [t_0,t_1])\bigr)\subset (-1,t_1]\times D^3.
\]
Note that $F_{t_1}^+\subset X_{t_1}$. The diffeomorphism type of $X_{t_1}$ is described as follows.
\begin{proposition}\label{prop:twohandle}
The 4-manifold $X_{t_1}$ is obtained from  $X_{t_0}$ by attaching a single 0-framed 2-handle to an unknot near the the crossing that changed in the above homotopy as indicated by Fig. \ref{fig:proof-h} which shows how the Kirby diagram for $X_{t_1}$ is obtained from that of $X_{t_0}$.
\end{proposition}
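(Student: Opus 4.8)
The plan is to identify the cobordism that the type~\ref{it:crossing} move attaches to $X_{t_0}$ and to recognise it, via the carving techniques of Section~6.2 of \cite{GS}, as a single $2$-handle attachment. Write $W:=([t_0,t_1]\stimes D^3)-\nu(T)$, where $T\subset [t_0,t_1]\stimes D^3$ is the trace of the regular homotopy $G$; then $W$ is a cobordism from $\d_{t_0}X$ to $\d_{t_1}X$ and $X_{t_1}=X_{t_0}\cup W$. Because $T$ is a framed immersed annulus with a single transverse double point $q$, removing $\nu(T)$ has the effect of self-plumbing $\nu(F^-_{t_0})$ once; a comparison of Euler characteristics then gives $\chi(X_{t_1})=\chi(X_{t_0})+1$, consistent with $W$ being a product cobordism with one $2$-handle attached. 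The work is to confirm this and to pin down the attaching circle together with its framing.

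First I would localise. Factoring $G$ through ambient isotopies of $D^3-\nu(C^+)$ that fix the knot outside a neighbourhood of the changing crossing, I may assume $G$ is the identity outside a small ball $B\subset D^3-\nu(C^+)$ and that inside $[t_0,t_1]\stimes B$ it performs the standard model of a crossing change: two properly embedded, unknotted, unlinked arcs in $B$ passing through one another through a single transverse double point. The two isotopy pieces of $G$ contribute product cobordisms to $W$, so up to a diffeomorphism that at the level of Kirby diagrams amounts merely to an isotopy of the picture they may be discarded; it therefore suffices to identify $([t_0,t_1]\stimes B)-\nu(T_B)$, where $T_B:=T\cap([t_0,t_1]\stimes B)$.

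Next I would analyse the local model. Up to diffeomorphism $T_B$ is the union of the two coordinate discs $\{z=0\}\cup\{w=0\}$ in the $4$-ball $\{|z|\le 1,\,|w|\le 1\}\subset\C^2$, i.e.\ the cone on the Hopf link; running the carving description of Section~6.2 of \cite{GS} (or computing the complement directly) identifies $([t_0,t_1]\stimes B)-\nu(T_B)$ with the product $(B-\nu(\text{two arcs}))\stimes[0,1]$ together with a single $2$-handle. Its attaching circle is a small unknot about the changed crossing --- the meridian of the disc being carved --- and its framing is $0$, since the framing induced on that unknot by the normal framing of the trace annulus is the untwisted one: $T$ is the flat trace of a homotopy in a product, and the two sheets meet in the standard way at $q$ (the sign of $q$, which we do not track, is irrelevant here). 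Reinstating the discarded isotopies, it follows that $X_{t_1}$ is diffeomorphic to $X_{t_0}$ with a single $0$-framed $2$-handle attached along an unknot lying in a small ball about the changed crossing, positioned as in Fig.~\ref{fig:proof-h}.

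The main obstacle is this local computation: verifying that the attaching circle is precisely the curve drawn in Fig.~\ref{fig:proof-h}, that its framing is $0$, and that the resulting boundary pair $(\d_{t_1}X,\d F^+_{t_1})$ comes out as the diagram with $C^-$ replaced by the crossing-changed knot $\hat C^-$; this requires carrying out the carving calculus of Section~6.2 of \cite{GS} with some care. One should in addition check that the dotted circle $C^-$ --- which records the carving of $F^-_{t_0}$ rather than a genuine $1$-handle of $[-1,t_0]\stimes D^3$ --- interacts with the new $2$-handle as drawn; but since the $2$-handle is supported in the ball $B$, disjoint from the remainder of the diagram before the isotopies are applied, this is a purely local matter that can be read off from the figure.
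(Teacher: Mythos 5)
Your overall strategy---localise the type \ref{it:crossing} move to a ball $B$ about the changing crossing and analyse the complement of the trace of the homotopy as a cobordism from $\d_{t_0}X$ to $\d_{t_1}X$---is the same as the paper's, but the proposal stops short of the actual content of the proposition. The assertion that $([t_0,t_1]\stimes B)-\nu(T_B)$ is ``the product together with a single $2$-handle'' is exactly what has to be proved, and you defer it to ``running the carving description of Section 6.2 of \cite{GS} with some care''; the Euler characteristic count is only a consistency check and cannot distinguish, say, a $2$-handle from a cancelling $1$-/$2$-pair plus a $2$-handle, nor can it locate the attaching circle or its framing. The paper's proof supplies precisely this missing computation by splitting the crossing change into two stages: during the approach and pass-through ($t\in[b,c]$) the complement changes only by pushing the interior of a $3$-ball into a collar, so $X_c\approx X_{t_0}$; during the retreat ($t\in[c',t_1]$) the vacated region $U\approx D^2\times[-1,1]$ is filled by the family $D\approx D^1\times(D^2\times D^1)$, which is visibly a $2$-handle with core $\{d\}\times D_0$, whose framing is read off as $0$ from the explicit pushoff $\{d+\eps\}\times\d D_0$, and whose attaching circle $K_0$ is then isotoped (Fig. \ref{fig:proof-g}) to the curve $K_0'$ of Fig. \ref{fig:proof-h}. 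Your appeal to the standard ``cone on the Hopf link'' local model is plausible but unestablished, and even granted it does not by itself tell you where the attaching circle sits in the Kirby diagram, which is what the proposition claims and what the later construction of the surfaces $\hat{T}_i$ and discs $D_i, D_i'$ depends on.

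Moreover, the one concrete identification you do offer---that the attaching circle is ``the meridian of the disc being carved''---is wrong. A $0$-framed $2$-handle attached along a meridian of the dotted circle would cancel the carving $1$-handle and kill the generator of $\pi_1$, whereas $\pi_1(X_t)\cong\Z$ throughout; indeed the actual attaching circles $L_i$ are nullhomologous in the complement of $C^-$ (each bounds the punctured torus $\hat{T}_i$ of Fig. \ref{fig:ver2-kirby-torus}), so they have linking number zero with the dotted component and are not meridians. The correct curve is the small unknot near the crossing exhibited by the isotopy of $K_0$ in the paper's proof, not a meridian of $F^-$. So while the framing heuristic (untwisted pushoff coming from the product structure) is in the right spirit, the proposal as written has a genuine gap at the identification of the handle and its attaching circle.
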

\begin{proof}
Let $B$ be a 3-ball neighborhood of the crossing of $C^-$ in $D^3$ such that on the complement of $B$, the homotopy $G_t$ is fixed  for all $t$. Then $\d_t X-B=\d_1 X - B$ for $t_0\leq t\leq t_1$; on the other hand, Fig. \ref{fig:proof-a} shows how $\d_t X\cap B$ changes across various values of $t\in [t_0,t_1]$ (here, $t_0<b<c<c'<t_1$). Indeed, for $b\leq t\leq c$ we may assume that $\d_t X\cap B$ appears as shown in Fig. \ref{fig:proof-b} (where $b<b_1<b_2<c$).
%
\begin{figure}[H]
\addtolength{\subfigcapskip}{0.1cm}
\centering
\subfigure[]{
\label{fig:proof-a}
\centering\includegraphics[width=0.65\textwidth]{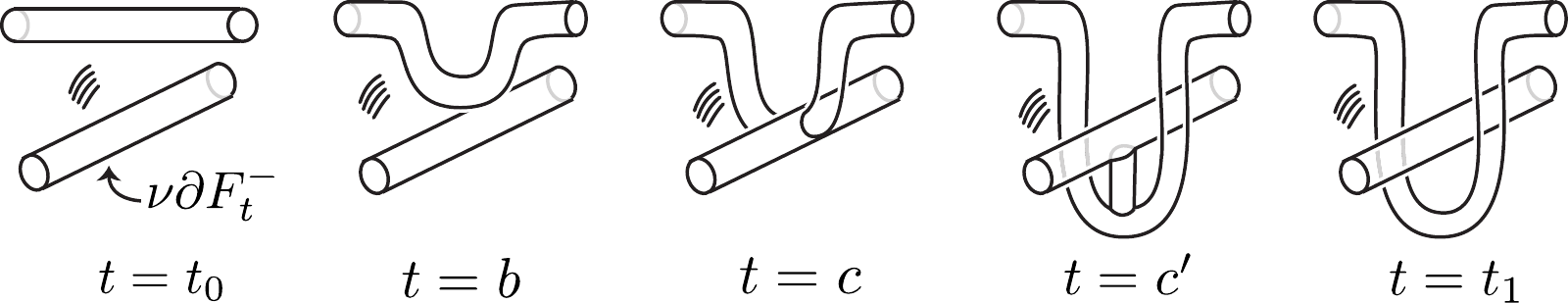}
}
\vspace{0.3cm}
\subfigure[$\d_t X\cap B$]{
\label{fig:proof-b}
\centering
\includegraphics[width=0.45\textwidth]{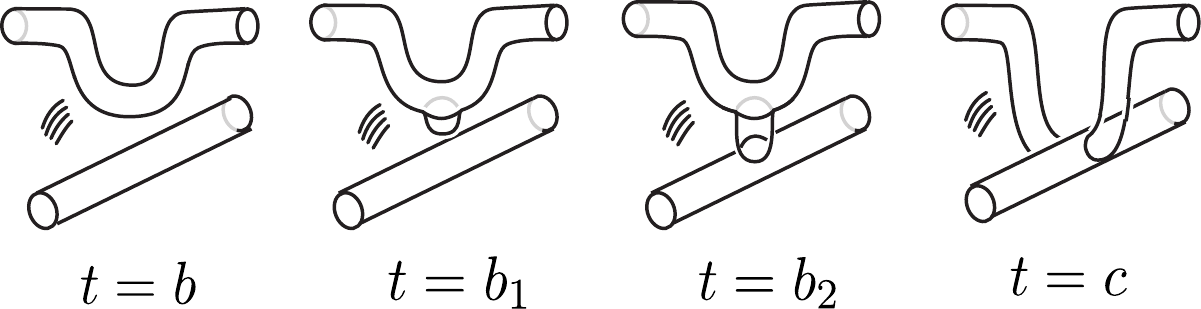}
}
  \caption{}\label{fig:proof1}
\end{figure}
Observe that $X_{b_2}\approx X_c$ is constructed from $X_b$ by first attaching a collar $[b,b_2]\times \d_b X$, then pushing the interior of a 3-ball in $\{b_2\}\times \d_b X$ into $(b, b_2)\times \d_b X$. A schematic is shown in Fig. \ref{fig:proof-c}. This does not change the diffeomorphism type of $X_b\approx X_{t_0}$, so $X_c\approx X_{t_0}$.
\begin{figure}[H]
\centering\includegraphics[width=0.35\textwidth]{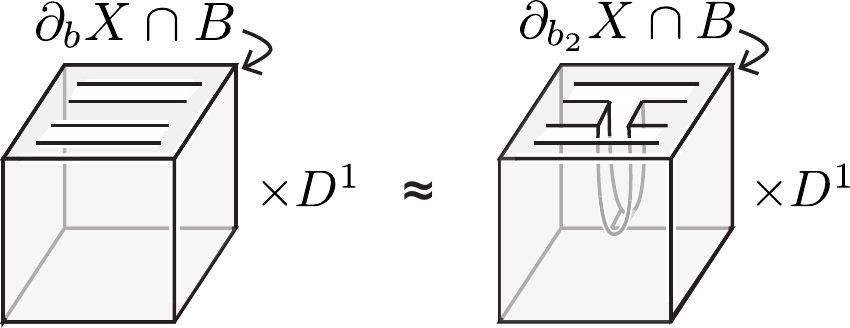}
\caption{Pushing a ball in $\d_{b_2} X$ into the interior.}\label{fig:proof-c}
\end{figure}
Now consider how $X_{t_1}$ is obtained from $X_{c'}\approx X_c$. Note that $\d_{t_1} X- \d_{c'} X$ is a 3-ball $U\subset B$ which we identify with $D^2\times [-1,1]$ so that $D^2\times \{\pm 1\} \subset \nu (\d F^-_{t_1})$ (see Fig. \ref{fig:proof-d}). Hence $X_{t_1}$ is constructed from $X_{c'}$ by attaching a collar $[c',t_1]\times \d_{c'} X$, followed by a family of 3-balls that fill out $U$ in this collar. Let $d\in (c',t_1)$. For $0\leq s\leq 1$, let $D_s=D^2\times [-s,s]\subset U$, and put
\begin{align*}
D=\bigl\{\{d(1-s)+t_1s\} \times D_s:0\leq s\leq 1\bigr\} \subset [d,t_1]\times U.
\end{align*}
See Fig. \ref{fig:proof-e}. Then $X_{t_1} = X_{c'}\cup ([c',t_1]\times \d_{c'} X)\underset{\{d\}\times \d D_0}{\cup} D.$
\begin{figure}[H]
\centering
\subfigure[$\d_{t_1} X\cap B$]{
\centering\includegraphics[width=0.2\textwidth]{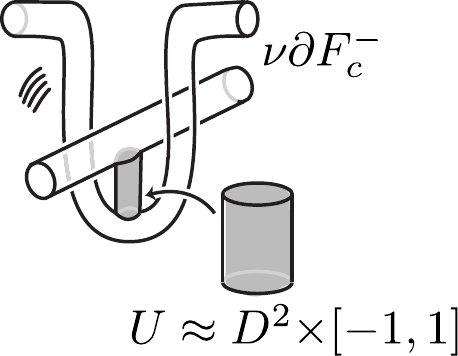}
\label{fig:proof-d}
}
\hskip 1cm
\subfigure[The family $D$ of 3-balls in {\color{white}............. .......}${[}c',t_1{]}\times U\subset X_{t_1}$.]{
\centering\includegraphics[width=0.45\textwidth]{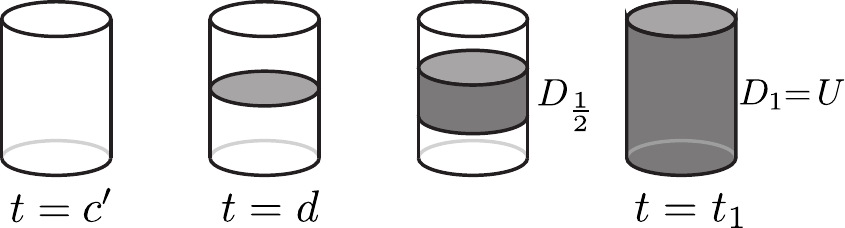}
\label{fig:proof-e}
}
\caption{}\label{fig:proof2}
\end{figure}
Now, $D\approx D^1\times (D^2\times D^1)$ may be viewed as a 2-handle with core $\{d\}\times D_0$, attached to $\widehat{X}_{t_1}:=X_{c'}\cup [c',t_1]\times \d_{c'} X\approx X_{t_0}$ along the circle $K_0=\{d\}\times \d D_0$ shown in Fig. \ref{fig:proof-g}. As a pushoff of $K_0$ in $\d \widehat{X}_{t_1}\cap D$ is of the form $\{d+\eps\} \times \d D_0\subset \{d+\eps\}\times \d_{d+\eps} X_{c'}$ (for some small $\eps>0$), the 2-handle is zero-framed.

\begin{figure}[H]
\centering
\includegraphics[width=\textwidth]{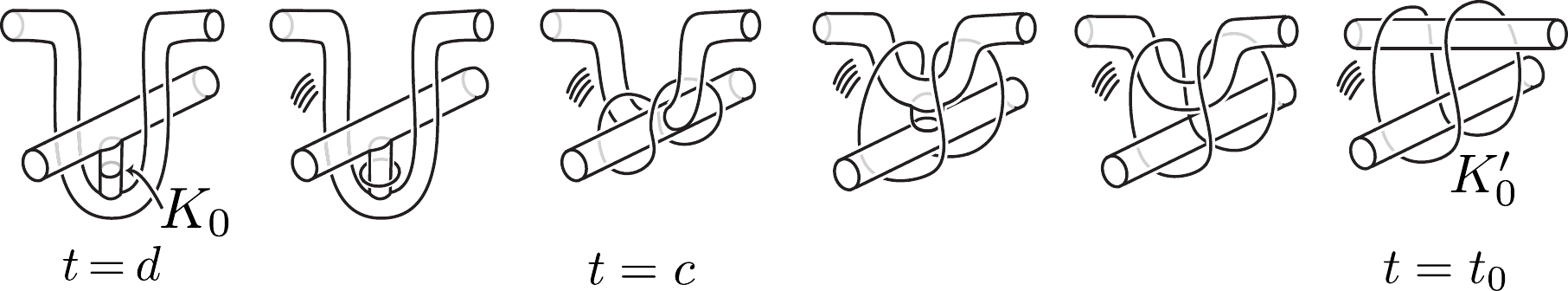}
\caption{$\d_t X\cap B$}\label{fig:proof-g}
\end{figure}

The unknot $K_0$ is isotopic in $X_{t_1}$ to each of the knots in $\d_t X$ (for various values of $t\in [t_1,d]$) shown in Fig. \ref{fig:proof-g}. Thus $X_{t_1} \approx X_{t_0}\cup \{\text{2-handle}\}$, where the 2-handle is zero-framed and attached to a knot of the form $K_0'$ in $\d_{t_1} X\subset \{t_1\}\times D^3$ near the crossing of $C^-=\d F^-_{t_1}$; the proposition follows.\QEDA\end{proof}

\begin{figure}[H]
\centering\includegraphics[width=0.3\textwidth]{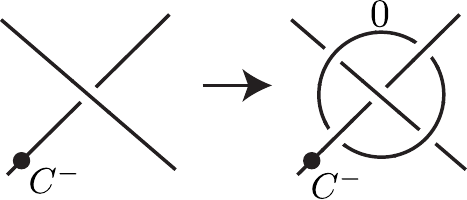}
\caption{The effect a crossing change of $C^-$ has on the Kirby diagram for $X_t$.}\label{fig:proof-h}
\end{figure}

Exhausting the above inductive process extends $F_0^{\pm}$ to $F_r^{\pm}$ for some $r\in (0,1)$ so that $F_{r}^{\pm}$ are properly immersed 2-discs in $(-1,r]\times D^3$ with unknotted boundaries $\d F_{r}^{\pm}$, appearing as $C^{\pm}$ in Fig. \ref{fig:movie-last}, respectively. Moreover, $F^+_r$ is properly immersed in $X_r$, which has Kirby diagram as shown in Fig. \ref{fig:kirby-label}.

For each $1\leq i\leq 5$, let $k_i\in (0,1)$ be such that  $\d F^+_{k_i}\subset \d_{k_i} X$ is the singular (undotted) knot in Fig. \ref{fig:movie} on which the double points $\{p_i^+,p_i^-\}$ appear (so that $k_{i}<k_{i+1}$). For example, $\d_{k_1} X$ appears in Fig. \ref{fig:movie}.3. Let $l_1<k_1$ be such that $\d_{l_1} X$ appears in Fig. \ref{fig:movie}.2; that is, just ``below'' the slice in which the double points $\{p_1^+,p_1^{-}\}$ appear. Similarly let $l_2,\ldots,l_5\in (l_1,1)$ be such that for each $2\leq i\leq 5$, $\d X_{l_i}=\d X_{k_i}$ and $\d F_{k_i}^+$ differs from $\d F_{l_i}^+$ only by the double points $\{p_i^+,p_i^-\}$ having formed. It follows from Proposition \ref{prop:twohandle} that $X_r$ has five 2-handles $h_1,\ldots, h_5$, where $h_i$ is attached to  $\d_{l_i} X$ along the knot $L_i$ shown in Fig. \ref{fig:kirby-label}.

Let $\hat{D}_1^+$ denote the obvious 2-disc bound by the unknot $C^+=\d F_r^+$ shown in Fig. \ref{fig:movie-last}, and let $D_1^+$ be the 2-disc in $[1,2)\times D^3$ obtained by pushing the interior of the 2-disc $\{1\}\times \hat{D}_2^+$ into $(1,2)\times D^3$. We obtain $F_2^{\pm}$ from $F_r^{\pm}$ by attaching collars and, for $F^-$,  capping off with this disc:
\begin{align*}
F_2^+ &= F_r^{-}\underset{\{r\}\times \d F_r^{-}}{\cup} ([r,2]\times \d F_r^{\pm})\subset (-1,2]\times D^3\\
F_2^{+} &= F_r^{+}\underset{\{r\}\times \d F_r^{+}}{\cup} ([r,1]\times \d F_r^{\pm}) \underset{\{1\}\times \d F_r^{\pm}}\cup D_1^+.
\end{align*}
Thus $F_2^+$ is an smoothly immersed 2-sphere in the interior of the smooth 4-manifold $X_2\approx X_r$, and $X_2$ has the Kirby diagram of Fig. \ref{fig:kirby-label}. Put $F^{\pm}:=F_2^{\pm}$ and $X:=X_2$. Note that the 4-manifold $X\subset [-1,2]\times D^3$ inherits a canonical orientation from the 4-ball, though this will not be needed.

It remains to make $F^{+}$ self-transverse.   We see from Fig. \ref{fig:movie} that for each pair of double points $\{p_i^+,p_i^-\}$ of $F^+$ ($1\leq i\leq 5$), positive and negative arcs $\alpha_i$ and $\beta_i$, respectively, may be chosen in $\d_{k_i} F^+$ so that the Whitney circle $\gamma_i=\alpha_i\cup \beta_i$ can be seen entirely in  $\d_{k_i} X\subset \{k_i\}\times D^3$. We now describe a small isotopy of $F^+$ that preserves $\gamma_i$ and has support on a neighborhood of $\alpha_i$. For $\eps>0$ sufficiently small we may identify $(X,F^+)\cap ([k_i-\eps,k_i+\eps]\times D^3) = ([-1,1]\times \d X_{k_i},[-1,1]\times\d F^+_{k_i})$, where $(\{0\}\times \d_{k_i} X,\{0\}\times \d F^+_{k_i}) = (\d_{k_i} X,\d F^+_{k_i})$.
%
%
\begin{figure}[h]
\centerline{\includegraphics[height=0.33\textwidth]{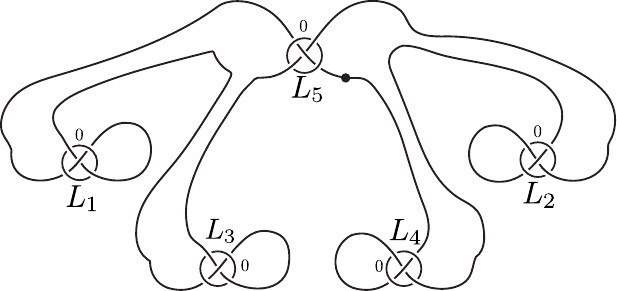}}
\caption{}\label{fig:kirby-label}
\end{figure}
A neighborhood ($\approx [-1,1] \times D^3$) of the positive arc $\alpha_i$ in $[-1,1]\times\d_{k_i} X$ is shown in Fig. \ref{fig:crossarcs}, appearing as an $[-1,1]$-family of 3-balls. Choose neighborhoods $N(\alpha_i)$ and $N(\beta_i)$ of $\alpha_i$ and $\beta_i$ in $\d F^+_{k_i}$, respectively. Then $U_i':=(-1,1)\times N(\alpha_i)$ and $V_i:=(-1,1)\times N(\beta_i)\subset [-1,1]\times \d X_{k_i}$ define neighborhoods of $\alpha_i$ and $\beta_i$ in $[-1,1]\times \d F^+_{k_i}\subset F^+$, respectively. We describe an isotopy of $F^+$ that has support on $U_i'$ as follows. Let $D$ be a 2-disc neighborhood of $\alpha_i$ in $\int U_i'$, and isotope $U_i'$ by pushing $D$ into $\0\times \d_{k_i} X$, as shown in  Fig. \ref{fig:crossnbds}, to obtain a new neighborhood $U_i$ of $\alpha_i$. Note that this isotopy may be done so that a collar of $U_i'$ is fixed. Now as the arc $\beta_i\subset V_i$ intersects the 2-disc $D\subset U_i$ transversely at $p_i^{\pm}$ in $\0\times \d_{k_i} X$ (see Fig. \subref{fig:crossnbds}) we have that in the isotoped 2-sphere, which we again call $F^+$, the two sheets $U_i$ and $V_i$ intersect transversely at $p_i^{\pm}$ in $[-1,1]\times \d_{k_i} X\subset X$. Henceforth we take $U_i$ and $V_i$ to be the positive and negatives sheets, respectively,  for the pair $\{p_i^+, p_i^-\}$.
\begin{figure}[h]
\centering
\subfigure[]{
\centering
\includegraphics[width=0.72\textwidth]{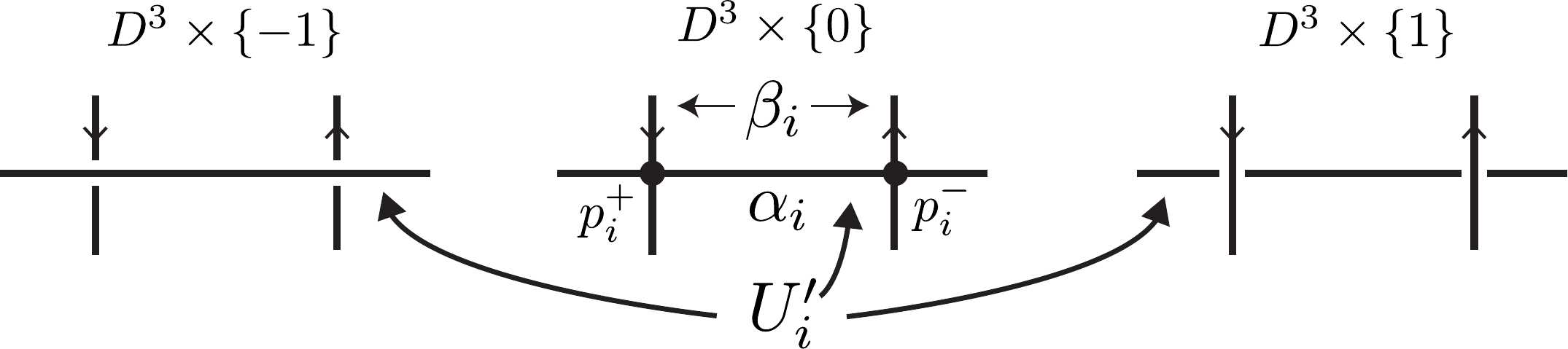}
\label{fig:crossarcs}
}
\vspace{0.3cm}
\subfigure[]{
\includegraphics[width=0.64\textwidth]{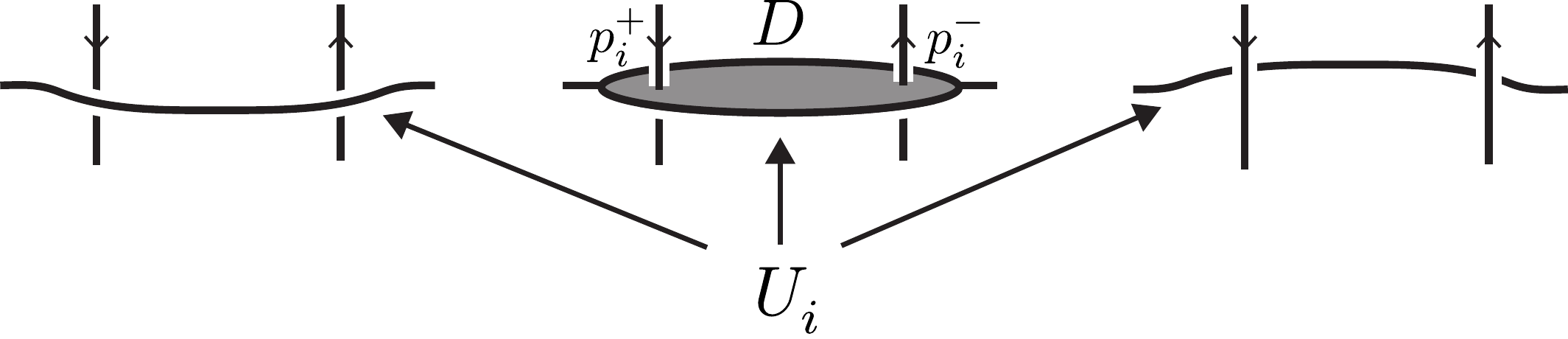}
\label{fig:crossnbds}
}
\caption{}\label{fig:proof3}
\end{figure}
By the construction of $F^{\pm}$, we may choose a link homotopy representative of the link map $f:S^2_+\cup S^2_-\to S^4$ described in \cite{Ki2} Section 6 (with $\sigma(f)=(0,t^2-4t+3)$) such that $f(S^2_+)=F^+$ is properly immersed in $S^4-\nu f(S^2_-)=X$, and $\mu(f^+)=0$. The above modifications do not change the appearance of $F^+$ in Fig. \ref{fig:movie} (as this only shows $\d F_t^+$ for finitely many values of $t$) but give us the following.
\begin{proposition}\label{prop:framing}
If $W_i$ is an embedded 2-disc in $\d_{k_i} X\subset \{k_i\}\times D^3$ with boundary $\gamma_i$, then $W_i$ is a framed Whitney disc for the pair of double points $\{p_i^+,p_i^-\}$ of the immersion $f^+:S^2\to X$.
\end{proposition}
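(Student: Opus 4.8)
The plan is to show that any embedded 2-disc $W_i$ lying entirely in the slice $\d_{k_i} X \subset \{k_i\} \times D^3$ with $\d W_i = \gamma_i$ automatically satisfies both requirements of being a ``Whitney disc'' in the sense of Section~\ref{sec:prelim}: first, that its interior is transverse to $F^+$ (indeed, we will show it can be taken \emph{disjoint} from $F^+$ in its interior, or at least that transversality is generic), and second — the substantive point — that $W_i$ is \emph{correctly framed}. Once $W_i$ is correctly framed, the hypotheses needed to define $\tau(f^+)$ via the formula $\tau(f^+) = \sum I(W_i)$ are all in place. So the content of the proposition is really a framing computation.

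\medskip

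The key steps, in order. \textbf{Step 1.} Recall from the construction at the end of Section~\ref{sec:example} that near $\alpha_i$ the sheet $U_i$ of $F^+$ has been pushed off the slice $\{k_i\} \times D^3$ into $(-1,1) \times \d_{k_i} X$, while the sheet $V_i$ containing $\beta_i$ remains (in a neighborhood of $\beta_i$) inside $\{k_i\} \times \d_{k_i} X$ up to a collar; the two sheets meet transversely at $p_i^\pm$. Thus $\gamma_i = \alpha_i \cup \beta_i$ lies in $\{k_i\} \times D^3$ and a disc $W_i$ spanning it there meets $F^+$ along $\partial W_i$ in the expected way. \textbf{Step 2.} Analyze the normal bundle of $W_i$ in $X$ along $\gamma_i$. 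Since $W_i \subset \{k_i\} \times D^3$, its normal bundle splits off the $[-1,1]$ ``time'' direction, i.e.\ $\nu(W_i \subset X) \cong \nu(W_i \subset \{k_i\}\times D^3) \oplus \underline{\R}$, where the $\R$-summand is spanned by $\partial_t$. Use this trivial time-direction summand to produce one of the two framing vector fields $\v_1, \v_2$ on $\partial W_i$. \textbf{Step 3.} Along $\alpha_i$, the sheet $U_i$ has been pushed in the $t$-direction, so the vector $\partial_t$ is normal to $F^+$ along $\alpha_i$; along $\beta_i$, the sheet $V_i$ lies in the slice (and is tangent to $\partial_t$? — no: $V_i$ is a subset of $\{k_i\}\times D^3$ so $\partial_t$ is normal to $V_i$ as well). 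Here I must be careful: the point is that $\partial_t$ together with the outward normal to $\gamma_i$ inside $W_i$, suitably combined, realize the ``correct framing'' pattern (one vector tangent to $F^+$ along $\alpha_i$ and normal along $\beta_i$, the other reversed). Concretely, take $\v_1$ to be (a smoothing of) the field that is tangent to $F^+$ along $\alpha_i$ and rotates to $\partial_t$ (normal) along $\beta_i$, and $\v_2$ its ``complement'' — and then observe that \emph{both} visibly extend over $W_i$: $\v_2$ as the constant $\partial_t$ field (which is defined on all of $W_i$ since $W_i$ is a flat slice), and $\v_1$ as a section of the rank-1 bundle $\nu(W_i \subset \{k_i\}\times D^3)$, which is trivial because $W_i$ is an embedded disc (contractible base). \textbf{Step 4.} Check the interior-transversality / disjointness claim: $W_i \subset \{k_i\}\times D^3$ can be chosen generically so $\int W_i$ misses the double points and is transverse to the other slices of $F^+$ appearing near time $k_i$; after a small perturbation this gives an honest framed Whitney disc, and intersections $\int W_i \cap F^+$ are exactly the ones counted by $I(W_i)$.

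\medskip

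The main obstacle I expect is \textbf{Step 3}: making the framing argument genuinely correct rather than merely plausible. The subtlety is that ``correct framing'' is not automatic for an arbitrary spanning disc — the obstruction to extending a boundary framing over $W_i$ lives in $\pi_1$ of the relevant bundle (i.e.\ a relative Euler number / twisting computation), and one must verify it vanishes for \emph{this} disc. The reason it works here is precisely that $W_i$ is planar — contained in the single 3-dimensional slice $\{k_i\}\times D^3$ — so that (i) one normal direction is the globally-constant $\partial_t$, killing half the framing problem, and (ii) the remaining normal line bundle over the disc is trivial. I would write this out by fixing an explicit model: identify a tubular neighborhood of $\gamma_i$ in $\{k_i\}\times D^3$ and exhibit $\v_1,\v_2$ as explicit fields there, matching the sheet-tangency pattern dictated by the pushed-off picture of Figs.~\ref{fig:crossarcs}--\ref{fig:crossnbds}, and then extend radially over $W_i$. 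A secondary point to handle carefully is orientation bookkeeping — that the framing one gets is the ``correct'' one (agrees with the boundary orientation conventions set up before the statement) and not its reverse on one sheet — but since the paper only needs the \emph{existence} of a framed Whitney disc (to apply the $\tau$ formula) and does not need to pin down a sign beyond $\sign(x)$ of interior intersections, an orientation mismatch can be absorbed by relabeling positive/negative arcs if necessary; I would remark on this rather than belabor it.
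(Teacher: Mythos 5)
Your overall strategy is the paper's: because $W_i$ lies in the single slice $\d_{k_i}X\subset\{k_i\}\times D^3$, its normal bundle in $X$ splits as the (trivial) in-slice normal line bundle plus the trivial $\partial_t$-direction, and the correct framing should be built from these two globally defined fields, so no obstruction computation is needed. But your Step 3 contains a genuine error in the sheet analysis, and it is exactly the point the proposition turns on. In the construction at the end of Section \ref{sec:example} the \emph{positive} sheet $U_i$ is the one isotoped so that a disc neighborhood $D$ of $\alpha_i$ lies \emph{inside} the slice $\{k_i\}\times\d_{k_i}X$, while the \emph{negative} sheet $V_i=(-1,1)\times N(\beta_i)$ sweeps the time direction and is therefore \emph{not} contained in the slice. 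Your parenthetical ``$V_i$ is a subset of $\{k_i\}\times D^3$ so $\partial_t$ is normal to $V_i$ as well'' is false: $\partial_t$ is tangent to $V_i$ along $\beta_i$ and normal to $U_i$ along $\alpha_i$, while the in-slice normal $\v$ to $W_i$ is tangent to $U_i$ along $\alpha_i$ and normal to $V_i$ along $\beta_i$. With the geometry stated correctly, the two constant-type fields $\v_1=(0,\v)$ and $\v_2=(1,\bo)=\partial_t$ already exhibit the correct-framing pattern on $\gamma_i$ and are defined on all of $W_i$, which is the paper's entire proof.

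Because you start from the wrong tangency pattern, your repair — taking $\v_1$ to be a field that ``rotates to $\partial_t$ along $\beta_i$'' while simultaneously keeping $\v_2$ equal to the constant $\partial_t$ — is internally inconsistent: along $\beta_i$ these two fields become parallel, so they do not form a framing, and the extension argument you give for $\v_1$ (as a section of the rank-one bundle $\nu(W_i\subset\{k_i\}\times D^3)$) no longer applies to a field that leaves that subbundle. No interpolation or smoothing is needed at all once the roles of $U_i$ and $V_i$ are stated correctly. Two smaller points: a Whitney disc's interior is only required to be \emph{transverse} to $f(S^2_+)$, not disjoint from it (the interior intersections are precisely what $I(W_i)$ counts), so the disjointness option in your Step 4 is not the right target; and relabelling the positive and negative arcs is not free, since the choice of $\alpha_i$ enters the definition of the primary group element, so one should verify the framing for the arcs as chosen rather than absorb a mismatch by swapping them.
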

\addvspace{0.5cm}To see this, define a correct framing for $W_i$ as follows. Using the identification above, view the collar $[-1,1]\times \d_{k_i} X\subset \R\times \R^3$. Choose an orientation for $\d W_i$, and let $\v$ denote the normal vector field to $W_i$ in $\R^3$ pointing outward from the side from which the oriented boundary $\d W_i$ runs counterclockwise. Then $\v_1:=(0,\v)$ and $\v_2:=(1,\bo)\in \R\times \R^3$ are linearly independent normal vector fields to $W_i\subset \d_{k_i} X$. Furthermore, by the construction of $U_i$ and $V_i$, we see that $\v_1$ is tangent to $U_i$ along $\alpha_i$ and normal to $V_i$ along $\beta_i$, while $\v_2$ is normal to $U_i$ along $\alpha_i$ and tangent to $V_i$ along  $\beta_i$. Thus $\{\v_1,\v_2\}$ is a correct framing of $W_i$.\QEDA

\section{Computing $\tau(f)$}

We proceed to show that the example $f^+:S^2\to X$ constructed in Section \ref{sec:example} has $\tau(f^+)\neq 0$. Let $t$ denote the homotopy class of a meridian of the dotted component in the Kirby diagram for $X$ (Fig.  \ref{fig:kirby-label}) so that $\pi_1(X)=\Z\langle t\rangle$ and, as described in Section \ref{sec:prelim}, $\tau(f^+)$ lies in $\Z[s^{\pm 1},t^{\pm 1}]/\cR$. We claim that $\Phi(\tau(f^+))=1+t\in \Z_2[\Z_2\times \Z_2]$. First we must verify that $\Phi$ is well-defined on the relations $\cR$.

\begin{lemma}\label{prop:lambda-comp}
For any $A\in \pi_2(X)$, $\Phi(s^k\lambda(F,A))=\Phi(\omega_2(A)s^k)=0$ for all $k\in \Z$.
\end{lemma}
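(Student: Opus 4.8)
The plan is to split Lemma \ref{prop:lambda-comp} into its two assertions: that $\omega_2$ is identically zero on $\pi_2(X)$, and that the Wall intersection form $\lambda$ vanishes identically on $\pi_2(X)$. Granting both, the right-hand side is $\Phi(\omega_2(A)s^k)=\Phi(0)=0$ and the left-hand side is $\Phi(s^k\lambda(F,A))=\Phi(s^k\cdot 0)=0$, so nothing more is required. As context I would also note that, among the generators of $\cR$, only (\ref{reln4}) actually needs this lemma: applying the displayed formula $\Phi(as^nt^m)=\bar a\,t^{\overline{n+nm+m}}$ one checks directly, using $k^2\equiv k$, $-k\equiv k$ and $-kl\equiv kl\pmod 2$, that the exponents occurring in (\ref{reln1}), (\ref{reln2}), (\ref{reln3}) coincide modulo $2$, so $\Phi$ kills those relations automatically.

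For the vanishing of $\omega_2$: the manifold $X=S^4-\nu f(S^2_-)$ is an open submanifold of $S^4$, so $TX=TS^4|_X$; since every sphere is stably parallelizable, $w(TS^4)=1$, hence $w_2(X)=0$. Therefore $\omega_2(A)=\langle w_2(X),h_*(A)\rangle=0$ for every $A\in\pi_2(X)$, where $h_*$ is the mod $2$ Hurewicz map.

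For the vanishing of $\lambda$: I would read off the $\Z[t^{\pm1}]$-module structure of $\pi_2(X)$ from the handlebody description of Section \ref{sec:example}. By that construction $X$ collapses onto a $2$-complex $K$ with a single $0$-cell, a single $1$-cell, and five $2$-cells coming from the $2$-handles $h_1,\dots,h_5$ of Fig. \ref{fig:kirby-label}, so $\pi_2(X)\cong H_2(\widetilde K)=\ker\bigl(\partial_2\colon\Z[t^{\pm1}]^5\to\Z[t^{\pm1}]\bigr)$, with $\partial_2$ the column of Fox derivatives of the attaching words. Because $\pi_1(X)\cong\Z$, the presentation of $\pi_1(X)$ read from $K$ is $\langle t\mid r_1,\dots,r_5\rangle$ with each $r_i\in\langle t\rangle$ a power of $t$, and this presents $\Z$ only if every $r_i$ is trivial; hence $\partial_2=0$ and $\pi_2(X)\cong\Z[t^{\pm1}]^5$ is free on the classes $[\hat{h}_i]$ of the spheres obtained by capping the $0$-framed core of $h_i$ with an embedded disc bounded by the unknot $L_i$. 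By Proposition \ref{prop:twohandle} the circles $L_i$ sit near five distinct crossings as a split unlink, so the $\hat{h}_i$ may be taken pairwise disjoint, embedded and $0$-framed; consequently $\lambda([\hat{h}_i],[\hat{h}_j])=0$ for all $i,j$, and since $\lambda$ is sesquilinear and $\{[\hat{h}_i]\}$ is a $\Z[t^{\pm1}]$-basis, $\lambda\equiv 0$. In particular $\lambda(F,A)=0$ for all $A\in\pi_2(X)$.

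The step I expect to be the main obstacle is that final geometric claim — that the attaching circles $L_1,\dots,L_5$ of Fig. \ref{fig:kirby-label} bound pairwise disjoint embedded discs that are also disjoint from the cores of the remaining handles — since this requires tracking all five handles carefully through the movie of Section \ref{sec:example}. If separating the five spheres simultaneously proves awkward, I would retreat to the weaker fact that actually suffices: the formula for $\Phi$ shows $\Phi(s^k p(t))=0$ for all $k\in\Z$ exactly when the sum of the even-degree coefficients and the sum of the odd-degree coefficients of $p\in\Z[t^{\pm1}]$ are both even, and by sesquilinearity this need only be verified for $A$ running over the generators $[\hat{h}_i]$, where $\lambda(F,[\hat{h}_i])$ is computed directly from the diagram. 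Either way, the heart of the argument is an honest understanding of $\pi_2(X)$ and of $\lambda|_{\pi_2(X)}$ from Fig. \ref{fig:kirby-label}.
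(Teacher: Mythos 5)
Your $\omega_2$ argument is fine (and in fact simpler than the paper's, which deduces $\omega_2(TX)=0$ from the even framings in the Kirby diagram of Fig.~\ref{fig:kirby-label} rather than from $TX=TS^4|_X$), and your computation of $\pi_2(X)\cong(\Z[t^{\pm1}])^5$ with a basis built from the $2$-handle cores runs parallel to the paper's Proposition~\ref{prop:pi2}. The gap is in the $\lambda$ half. Your main route rests on the claim that each attaching circle $L_i$ bounds an embedded disc, giving pairwise disjoint embedded $0$-framed basis spheres, whence $\lambda\equiv 0$ on $\pi_2(X)$ and in particular $\lambda(F^+,A)=0$. But $L_i$ is attached near a crossing of the dotted circle precisely because it encircles the two strands of $C^-$ involved in the crossing change (Proposition~\ref{prop:twohandle}, Fig.~\ref{fig:proof-h}); in the slice $\d_{l_i}X$ it bounds only a punctured torus $\hat{T}_i$ in the complement of $C^-$ (Fig.~\ref{fig:ver2-kirby-torus}), not a disc. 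To cap $L_i$ inside $X$ the paper must surger $\hat{T}_i$ along the discs $D_i$, $D_i'$, and the resulting basis spheres $A_i$ are immersed (one self-intersection each) and do meet $F^+$: the paper finds two (resp.\ four) intersection points of $F^+$ with $D_i$ and as many with $D_i'$, and only after applying $\Phi$ does the total become $(1+t)^2\equiv 0$ in $\Z_2[\Z_2\times\Z_2]$, using both $2=0$ and $t^2=1$. Nothing forces the integral Wall pairing $\lambda(F^+,A_i)\in\Z[t^{\pm1}]$ to vanish---for $i=1$ it has the shape $(\pm t^a\pm t^{a\pm1})(1-t^{\pm1})$ up to the actual signs---so the assertion ``$\lambda\equiv 0$, in particular $\lambda(F,A)=0$'' is unsupported and should not be expected to hold; the lemma is genuinely a mod-$2$ statement, not the shadow of an integral vanishing.

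Your fallback---verify $\Phi(s^k\lambda(F^+,\cdot))=0$ on a $\Z[t^{\pm1}]$-basis and invoke bilinearity---is exactly the paper's strategy, but you leave it as a contingency and, as written, with incorrect basis representatives. Carrying it out requires constructing each $A_i$ as the surgered torus capped off by the handle core, locating every intersection of $F^+$ with $\{s_i\}\times D_i$ and $\{s_i'\}\times D_i'$ in the movie (Figs.~\ref{fig:ver2-D1}--\ref{fig:ver2-D5}), and checking that these intersections pair off so that $\Phi$ of each pairing acquires two factors of $(1+t)$. That intersection count is the substance of the lemma and is the piece missing from your proposal.
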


\begin{proposition}\label{prop:phi-homo}
The homomorphism $\Phi$ descends to a surjective homomorphism
\begingroup
\belowdisplayskip=-10pt plus 0pt minus 0cm
\[
    \Z[\Z\times \Z]/\cR \to \Z_2[\Z_2\times\Z_2].
\]
\endgroup
\end{proposition}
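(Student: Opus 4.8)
The plan is to check that the ring homomorphism $\Phi$ annihilates $\cR$, so that it factors through $\Z[\Z\times\Z]/\cR$; surjectivity of the induced map then passes from that of $\Phi$, since the quotient map $\Z[\Z\times\Z]\to\Z[\Z\times\Z]/\cR$ is onto and hence the two maps have the same image. Because $\cR$ is generated as a subgroup by the elements displayed in \eqref{reln1}--\eqref{reln4} and $\Phi$ is additive, it suffices to verify that $\Phi$ vanishes on each of these generators.

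The relations \eqref{reln1}--\eqref{reln3} are handled by the single observation, immediate from the definition of $\Phi$ (equivalently, from the formula $\Phi(a s^n t^m)=\bar a\,t^{\overline{n+nm+m}}$ of the Remark), that the value of $\Phi$ on a monomial $s^k t^l$ depends only on the parities of $k$ and $l$: it is $1$ when $k$ and $l$ are both even and is $t$ otherwise. Hence $\Phi(s^kt^l)=\Phi(s^lt^k)$, since the condition ``$k,l$ both even'' is symmetric in $k,l$; as we work with $\Z_2$ coefficients this gives $\Phi(s^kt^l+s^lt^k)=0$, which is \eqref{reln3}. For \eqref{reln1}, the exponents of $s^kt^k$ and of $s^k=s^kt^0$ fall in the ``both even'' case precisely when $k$ is even, so $\Phi(s^kt^k)=\Phi(s^k)$ and the relation maps to $0$. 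For \eqref{reln2}, $-k$ has the parity of $k$, and when $k$ is even $l-k$ has the parity of $l$; inspecting the cases $k$ even and $k$ odd shows $\Phi(s^kt^l)=\Phi(s^{-k}t^{l-k})$ in each, so $\Phi(s^kt^l+s^{-k}t^{l-k})=0$. Finally, the generator of $\cR$ arising from \eqref{reln4} is $s^k\lambda(f(S^2_+),A)-\omega_2(A)s^k$, and Lemma~\ref{prop:lambda-comp} (applied with $F=f(S^2_+)$) gives $\Phi(s^k\lambda(f(S^2_+),A))=0=\Phi(\omega_2(A)s^k)$, so this is annihilated too. Therefore $\Phi(\cR)=0$, $\Phi$ descends, and the descended map is surjective because $\Phi$ is; this proves the proposition.

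There is no serious obstacle here once Lemma~\ref{prop:lambda-comp} is in hand: the whole argument reduces to the parity bookkeeping above, and the only points needing care are confirming that \eqref{reln1}--\eqref{reln4} genuinely exhaust the additive generators of $\cR$ and getting the index shift in \eqref{reln2} right. The real geometric content — computing the Wall intersection form $\lambda$ on $\pi_2(X)$ for the handlebody $X$ of Fig.~\ref{fig:kirby-label} — is precisely what is packaged into Lemma~\ref{prop:lambda-comp}, and is where any actual difficulty lies.
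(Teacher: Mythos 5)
Your proposal is correct and follows essentially the same route as the paper: it verifies that $\Phi$ kills the additive generators \eqref{reln1}--\eqref{reln3} by the parity observation (your packaging of the paper's explicit case analysis), handles \eqref{reln4} via Lemma~\ref{prop:lambda-comp}, and deduces surjectivity of the induced map from that of $\Phi$, just as the paper does by noting $1=\Phi(1)$ and $t=\Phi(t)$.
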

\begin{proof}
By Lemma \ref{prop:lambda-comp}, $\Phi$ preserves relation \eqref{reln4}. We check that $\Phi$ is well-defined on the relations \eqref{reln1}-\eqref{reln3}. For $k,l\in \Z$ we have:
\begin{align*}
\Phi(s^kt^k-s^k) &= \begin{cases} 1-1=0 \; \text{ if $k=0\mod 2$,}\\
t-t = 0\; \text{ otherwise};\end{cases}\\
\Phi(s^kt^l+s^lt^k) &= \begin{cases} 1+1=0 \; \text{ if $k=0=l\mod 2$,}\\
t+t = 0\; \text{ otherwise};\end{cases}\\
\text{ and}\\
\Phi(s^kt^l+s^{-k}t^{l-k}) &= \begin{cases} 1+1=0 \; \text{ if $k=0=l\mod 2$}\\
t+t = 0\; \text{ otherwise}.\end{cases}
\end{align*}
Thus $\Phi$ descends to $\Z[\Z\times \Z]/\cR$. Since $1=\Phi(1)$ and $t=\Phi(t)$, the induced map  is surjective.\QEDA\end{proof}

To prove Lemma \ref{prop:lambda-comp} we need the following two propositions, whose proofs are deferred to Section \ref{sec:proofs}, and a calculation of certain Wall intersections.

\begin{proposition}\label{prop:w2}
For any 2-sphere $A\in \pi_2(X)$ we have $\omega_2(A)=0$.
\end{proposition}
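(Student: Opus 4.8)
The plan is to reduce everything to the single fact that $w_2(X)=0\in H^2(X;\Z_2)$. By definition, for $A\in\pi_2(X)$ the quantity $\omega_2(A)$ is the evaluation $\langle w_2(X),h(A)\rangle$ of the second Stiefel--Whitney class of $TX$ on the Hurewicz image $h(A)\in H_2(X;\Z)$ (equivalently, $\omega_2(A)$ is the mod $2$ normal Euler number of any immersed representative of $A$, which agrees with this since $TS^2$ has even Euler number $2$). So it suffices to prove $w_2(X)=0$.

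For this, recall that by its construction $X=S^4-\nu f(S^2_-)$ is a compact codimension-zero submanifold of $S^4$. Hence its tangent bundle is the restriction $i^\ast TS^4$ of $TS^4$ along the inclusion $i\colon X\hookrightarrow S^4$, so $w_2(X)=i^\ast w_2(S^4)$. Since $H^2(S^4;\Z_2)=0$ we have $w_2(S^4)=0$, and therefore $w_2(X)=0$; consequently $\omega_2(A)=\langle w_2(X),h(A)\rangle=0$ for every $A\in\pi_2(X)$.

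There is essentially nothing delicate in this argument; the only point requiring a moment's thought is the identification of $\omega_2$ with the evaluation of $w_2(TX)$, which is immediate once one notes $TX=i^\ast TS^4$, and the observation that $X$ sits in $S^4$ with codimension zero. If one preferred an argument that does not reference the ambient $S^4$, one could instead read $w_2(X)=0$ off the handlebody picture of Fig.~\ref{fig:kirby-label}: $X$ is built from $B^4$ with a single $1$-handle and five $0$-framed $2$-handles, and checking that this framing data is even shows $X$ admits a spin structure, hence $w_2(X)=0$ again. Either way the proposition follows.
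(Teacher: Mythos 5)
Your argument is correct, and your main route differs from the paper's. The paper first establishes $w_2(TX)=0$ by inspecting the Kirby diagram of Fig.~\ref{fig:kirby-label} (all five $2$-handles are $0$-framed, hence even, so $X$ is spin by \cite{GS} Corollary 5.7.2), and then applies the Whitney sum formula $TS^2\oplus\nu_A=A^\ast(TX)$ together with $w_1(TS^2)=w_2(TS^2)=0$ to conclude $\omega_2(\nu_A)=A^\ast w_2(TX)=0$ for an immersed representative. You instead get $w_2(TX)=0$ for free from the codimension-zero inclusion $X=S^4-\nu f(S^2_-)\hookrightarrow S^4$, via $TX=i^\ast TS^4$ and $H^2(S^4;\Z_2)=0$; this is more elementary and does not depend on having correctly derived the handle decomposition, whereas the paper's version stays internal to the explicit Kirby picture it has just constructed (and which it needs elsewhere anyway). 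Your second paragraph's alternative is essentially the paper's proof. The remaining step — identifying $\omega_2(A)$ with $\langle w_2(TX),h(A)\rangle$, equivalently with the mod $2$ normal Euler number of an immersed representative — is exactly the Whitney-sum observation the paper spells out, and you handle it correctly. Both arguments are sound; yours is the shorter one.
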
\vspace{-0.3cm}

\begin{proposition}\label{prop:pi2}
Let $Y$ be an orientable 4-manifold with a handle decomposition consisting of one 0-handle, one 1-handle and $n$ 2-handles. Denote the cores of these handles by $e^0$, $e^1$ and $\{e_1^2,\ldots, e_n^2\}$, respectively, and let $L_i= \d e_i^2$  for each $1\leq i\leq n$. Suppose also that $\pi_1(Y) \cong \Z$. Then
\begin{enumerate}[label=\emph{(\roman{*})}, ref=(\roman{*})]
\item\label{pi2:2} $\pi_2(Y) \cong (\Z[\Z])^n$.
\item\label{pi2:3} Moreover, suppose that for each $1\leq i\leq n$, a (continuous) 2-sphere $A_i$ in $Y$ is constructed by attaching $e_i^2$ to a (continuous) 2-disc in $Y$ along $L_i$. Then the the 2-spheres $A_1,\ldots, A_n$ represent a $\Z[\Z]$-basis for $\pi_2(Y)$.
\end{enumerate}
\vspace{-0.5cm}
\end{proposition}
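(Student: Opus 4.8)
The plan is to compute $\pi_2(Y)$ from the universal cover $p\colon\tw{Y}\to Y$ via the isomorphisms $\pi_2(Y)\cong\pi_2(\tw{Y})\cong H_2(\tw{Y})$ (the first induced by $p$, the second the Hurewicz isomorphism for the simply connected space $\tw{Y}$), all of which respect the $\Z[\Z]$-module structure (the one on $H_2(\tw{Y})$ coming from the deck group $\pi_1(Y)\cong\Z$). Let $Y^{(1)}$ be the $1$-handlebody $e^0\cup e^1$; since $Y$ is orientable, $Y^{(1)}\cong S^1\times D^3$, so $\pi_1(Y^{(1)})\cong\Z$, and the natural surjection $\pi_1(Y^{(1)})\to\pi_1(Y)\cong\Z$ has kernel the subgroup of $\Z$ generated by $[L_1],\dots,[L_n]$. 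Because the only quotient of $\Z$ isomorphic to $\Z$ is $\Z$ itself, that kernel is trivial, i.e.\ each $[L_i]=0$ and the surjection is an isomorphism; hence $\tw{Y^{(1)}}:=p^{-1}(Y^{(1)})$ is the universal cover of $Y^{(1)}$, which is $\R\times D^3$ and so contractible.

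Part \ref{pi2:2}. Viewing the handle decomposition as a CW structure, the equivariant cellular chain complex of $\tw{Y}$ is $(\Z[\Z])^n\to\Z[\Z]\to\Z[\Z]$ in degrees $2,1,0$ (one free generator for each handle, and nothing in higher degrees). Consequently the relative chain complex $C_\ast(\tw{Y},\tw{Y^{(1)}})$ is concentrated in degree $2$, free on the cores $e_1^2,\dots,e_n^2$, so $H_2(\tw{Y},\tw{Y^{(1)}})\cong(\Z[\Z])^n$ as a $\Z[\Z]$-module. Since $\tw{Y^{(1)}}$ is contractible, the long exact sequence of the pair yields $H_2(\tw{Y})\cong H_2(\tw{Y},\tw{Y^{(1)}})\cong(\Z[\Z])^n$, hence $\pi_2(Y)\cong(\Z[\Z])^n$. (Equivalently, collapsing the contractible $\tw{Y^{(1)}}$ shows $\tw{Y}$ is homotopy equivalent to a wedge of $2$-spheres, one per $2$-cell of $\tw{Y}$.)

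Part \ref{pi2:3}. Fix lifts $\tw{e}^2_i$ of the cores, so that under the identification above $H_2(\tw{Y})$ becomes the free $\Z[\Z]$-module on $\{\tw{e}^2_i\}$. Given $A_i=e_i^2\cup_{L_i}\Delta_i$, lift the capping disc to $\tw{\Delta}_i$ with $\d\tw{\Delta}_i=\tw{L}_i=\d\tw{e}^2_i$, giving a lift $\tw{A}_i=\tw{e}^2_i\cup_{\tw{L}_i}\tw{\Delta}_i$ of $A_i$. Then the class of $A_i$ in $\pi_2(Y)\cong H_2(\tw{Y})\cong H_2(\tw{Y},\tw{Y^{(1)}})$ is $[\tw{e}^2_i]-[\tw{\Delta}_i]$, and it remains to check $[\tw{\Delta}_i]=0$ in $H_2(\tw{Y},\tw{Y^{(1)}})$ — that is, that the disc $\tw{\Delta}_i$, whose boundary lies in the contractible space $\tw{Y^{(1)}}$, is homotopic rel boundary into $\tw{Y^{(1)}}$. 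This is where the construction of the $A_i$ is used: the capping disc is taken in the $1$-handlebody, disjoint from the cores of the $2$-handles, so its lift lies in $\tw{Y^{(1)}}$ and $[\tw{\Delta}_i]=0$. Hence $[A_i]=\tw{e}^2_i$, and the $A_i$ form a $\Z[\Z]$-basis.

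Everything through Part \ref{pi2:2} is routine; the real content — and the main obstacle — is the vanishing of $[\tw{\Delta}_i]$ in Part \ref{pi2:3}. It genuinely requires the hypothesis that the $A_i$ are built from the evident capping discs of the handle diagram: for an arbitrary continuous disc $\Delta_i$ one can have $[\tw{\Delta}_i]$ equal to any class of $H_2(\tw{Y})$ (e.g.\ after connect-summing $\Delta_i$ with essential $2$-spheres), which would destroy the basis property. As an alternative to computing $[\tw{\Delta}_i]$ directly, once one knows the $[A_i]$ generate $\pi_2(Y)$ one could invoke that a surjective endomorphism of a finitely generated module over a commutative ring is an isomorphism to conclude they form a basis — but proving generation relies on the same explicit picture.
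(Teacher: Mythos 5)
Your proof is correct and follows essentially the same route as the paper's: pass to the universal cover $p:\tw{Y}\to Y$, combine the covering-space and Hurewicz isomorphisms, and read $H_2(\tw{Y})\cong(\Z[\Z])^n$ off the equivariant cellular chain complex (you organize this via the pair $(\tw{Y},p^{-1}(Y^{(1)}))$ and contractibility of $p^{-1}(Y^{(1)})$, while the paper notes $\tw{\d}_1$ is injective so $\tw{\d}_2=0$ --- the same computation), and then identify $[\tw{A}_i]=[\tw{e}^2_i]$ by lifting the capping disc into the contractible $1$-skeleton. Your closing caveat is well taken and is in fact the one point where you are more careful than the paper, whose proof of part (ii) simply asserts that any $A_i$ as in the hypothesis is homotopic to the standard sphere $e_i^2\cup_{L_i}D_i$ with $D_i$ in the $0$-handle; as you observe, this requires reading the hypothesis as supplying a capping disc supported away from the $2$-handles (as it is in the paper's application), since an arbitrary continuous disc could shift $[A_i]$ by any element of $\pi_2(Y)$.
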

\begin{proof}[Proof of Lemma \ref{prop:lambda-comp}]
By Proposition \ref{prop:pi2}\label{pi2:2} we have $\pi_2(X)\cong (\Z[\Z])^{5}$. We construct five properly immersed 2-spheres $A_1,A_2,\ldots, A_5$ in $X$ satisfying Proposition \ref{prop:pi2}\ref{pi2:3} and compute their Wall intersections with $F^+$.  Recall from Section \ref{sec:example} that for each $1\leq i\leq 5$, the (zero-framed) 2-handle $h_i$ of $X$ attaches along the knot $L_i$ in $\d_{l_i} X\subset \{l_i\}\times D^3$. The attaching circle $L_i$ bounds an embedded  punctured torus $\hat{T}_i$ in $\d_{l_i} X$; for example, the punctured torus $\hat{T}_1\subset \d_{l_1} X$ is pictured in  Fig. \ref{fig:ver2-kirby-torus}.

\begin{figure}[h]
\centerline{\includegraphics[width=0.5\textwidth]{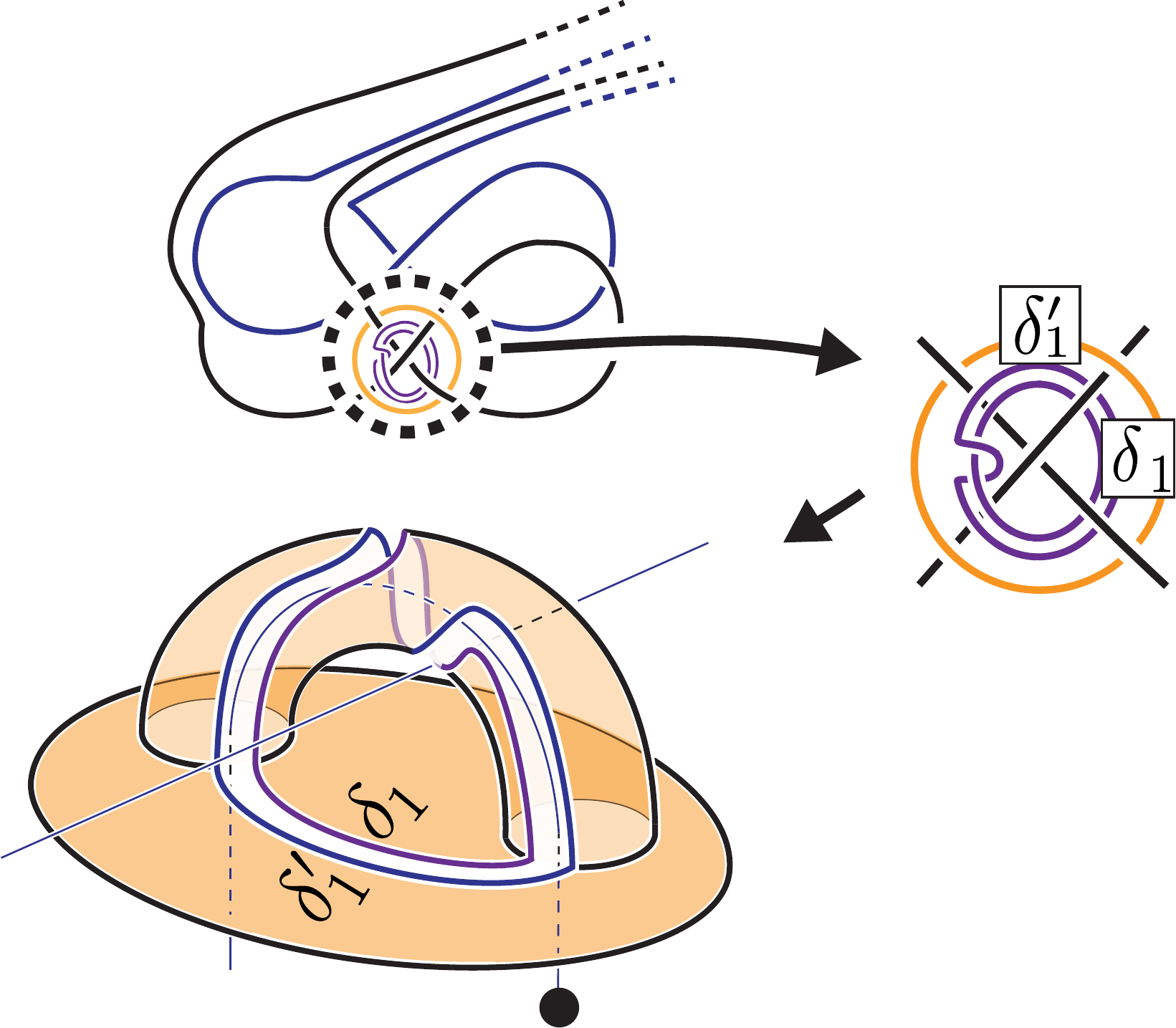}}
\caption{$\d_{l_1} X$}\label{fig:ver2-kirby-torus}
\end{figure}
There is a circle $\delta_i\subset \hat{T}_i$ and (its pushoff) $\delta_i'$ in $\hat{T}_i$ which bound the embedded 2-discs $D_i$ and $D_i'\subset D^3$, respectively, which appear in Fig. \ref{fig:discs}. For example, $D_1$ and $D_1'$ are seen in Figs. \ref{fig:ver2-D1} and \ref{fig:ver2-D1par}, respectively. Let $B_i$ denote the annulus in $\widehat{T}_i$ bound by $\delta_i$ and $\delta_i'$. By the construction of $X$ and $F^+$ we may choose $s_i<l_i$ such that $(X_{l_i}, F_{l_i}^+)\cap ([s_i,l_i]\times D^3) = ([s_i,l_i]\times \d_{l_i}X, [s_i,l_i]\times \d F^+_{l_i})$. We  surger $\widehat{T}_i$ in $[s_i,l_i]\times D^3$ to produce a properly immersed 2-disc in $X_{l_i}$ as follows. Let $s_i'\in(s_i,l_i)$. The circles $\{l_i\}\times \delta_i$ and $\{l_i\}\times \delta_i'$ in the punctured torus $\{l_i\}\times \hat{T}_i\subset \{l_i\}\times \d_{l_i} X$ bound the 2-discs
\begin{align*}
    \hat{D}_i &= (\{s_i\}\times D_i)\underset{\{s_i\}\times\delta_i}{\cup} ([s_i,l_i]\times\delta_i) \text{ and}\\
    \hat{D}_i' &= (\{s_i'\}\times D_i')\underset{\{s_i'\}\times\delta_i'}{\cup} ([s_i',l_i]\times\delta_i'),
\end{align*}
respectively. The 2-disc $\hat{D}_1$, for example, can be seen as first $\{s_1\}\times D_1$ in $\d_{s_1} X = \{s_1\}\times \d_{l_1} X$ (Fig. \ref{fig:ver2-D1}), and then subsequently as $\t \times \delta_1$ in $\d_{t} X=\{t\}\times \d_{l_1} X$ for $s_1\leq t\leq l_1$. The other discs $\hat{D}_i$ (or $\hat{D}_i'$) appear similarly. (Note that although the interiors of these discs are mutually disjoint, for each $i$ the discs $\hat{D}_i$ and $\hat{D}_i'$ intersect (transversely) at a single point in $\d_{s_i'} X$ because $\delta_i$ and $\delta_i'$ link once.) Now, the pair of circles $\{l_i\}\times (\delta_i\cup \delta_i')$ bound the annulus $\{l_i\}\times B_i\subset \{l_i\}\times \hat{T}_i$. Thus, as $\delta_i$ is an  essential loop in $\hat{T}_i$, by removing this annulus from $\{l_i\}\times \hat{T}_i$ and attaching $(\hat{D}_i\cup \hat{D}_i')$ we obtain an immersed 2-disc $\hat{A}_i$ in $[s_i,l_i]\times \d_{l_i} X \subset X_{l_i}$ with boundary $\{l_i\}\times L_i=L_i$. That is, let
\[
\hat{A}_i = \bigl(\{l_i\}\times (\hat{T}_i- \int B_i)\bigr) \underset{\{l_i\}\times (\delta_i\cup \delta_i')}{\cup} (\hat{D}_i\cup \hat{D}_i').
\]
Let $e_i$ denote the core of the 2-handle $h_i$ of $X$ (which attaches to $L_i$ in $\{l_i\}\times \d_{l_i} X$). We use this to cap off $\hat{A}_i$:
\[
A_i = \hat{A}_i \underset{L_i}{\cup} e_i.
\]
Then $A_i$ is an immersed 2-sphere in the interior of $X$ constructed as per Proposition \ref{prop:pi2}\ref{pi2:3}. Thus  $A_1,A_2,\ldots, A_5$ represent a $\Z[\Z]$-basis for $\pi_2(X)\cong (\Z[\Z])^5$.

Let $\tl$ denote the composition
\[
    \tl:\pi_2(X)\times \pi_2(X)\xrightarrow{\lambda(\cdot, \cdot)} \Z[t^{\pm 1}] \xrightarrow{\Phi} \Z_2\langle t\rangle,
\]
where $\Z_2\langle t\rangle = \langle 1, t|\,t^2=1\rangle$ written multiplicatively, and note that since $\Phi(s^kt^n)=\Phi(s^kt^{\bar{n}})$ for any $k,n\in \Z$, we have $\Phi(s^k\lambda(C,D))=\Phi(s^k\tl(C,D))$ for all $C, D\in\pi_2(X)$, $k\in \Z$.
In computing $\tl$ we may choose orientations arbitrarily. Observe also that a point in a slice $\d_t X$ separated from $\d F_t^-$ (the dotted component in one of the subfigures of Fig. \ref{fig:movie}) by a 3-ball can be joined to any other such point in another slice $\d X_{t'}$ ($t<t'$) by a path in $[t,t']\times D^3$ separated from $F^-$ by a 4-ball, so we may safely vary the basepoint of $X$ to be any  point in any slice away from the dotted component.  By Proposition \ref{prop:w2} we need to show that $\Phi(s^k\tl(F^+,A))=0$ for all $A\in \pi_2(X)$. Let $1\leq i\leq 5$. The only intersections between $F^+$ and $A_i$ occur between $\d_{s_i} F^+$ and $\{s_i\}\times D_i$ in $\d_{s_i} X$, and between $\d_{s_i'} F$ and $\{s_i'\}\times D_i'$ in $\d_{s_i'}X$. These intersections are transverse as can be seen, for example, in Figs. \ref{fig:ver2-D1} and \ref{fig:ver2-D1par} for the case $i=1$. Let $\tl(F^+,D_i)$ (or $\tl(F^+,D_i')$)  denote the contribution to $\tl(F^+,A_i)$ due to intersections between $F^+$ and $\{s_i\}\times D_i$ (or $\{s_i'\}\times D_i'$, resp.) in $\d_{s_i} X$ (or $\d_{s_i'} X$, resp.). By the construction of $A_i$, each intersection point between $F^+$ and $\{s_i\}\times D_i$ is paired with exactly one intersection point between $F^+$ and $\{s_i'\}\times D_i'$. We claim that $\tl(F^+, A_i) = (1+t)\tl(F^+,D_i)$. Choose a basepoint for $A_i$ on $\widehat{T}_i$ as shown Fig. \ref{fig:torus-basepoint}. Suppose a point $x\in F^+\cap (\{s_i\}\times D_i)$ contributes $\Phi(\sign(x)r^{x})$ to $\tl(F^+,D_i)$, where $r_x\in \pi_1(X)=\Z\langle t\rangle $ is represented by a loop that goes from the basepoint of $X$ to the basepoint of $F^+$, along $F^+$ to $x$,  along $A_i$ to its basepoint, then back to the basepoint of $X$. Then there is a point $x'\in F^+\cap (\{s_i'\}\times D_i')$ with opposite sign that contributes $\Phi(-\sign(x)r_{x'})$ to $\tl(F^+,D_i')$, where $r_{x}r_{x'}^{-1}$ is homotopic to a loop in $\d_{l_i} X$ that links once with the dotted component $C^-=\d F^-_{l_i}$. See Fig. \ref{fig:torus-basepoint}. Thus $r_{x'}=t^{\eps} r_{x}\in \pi_1(X)$ for some $\eps\in\{-1,1\}$, so the contribution to $\tl(F^+,A_i)$ made by these two points is $\Phi(\sign(x)(1-t^{\eps}r_x))=(1+t)\Phi(r_x)$. The claim then follows.
\begin{figure}[h]
   \centering
   \includegraphics[width=0.5\textwidth]{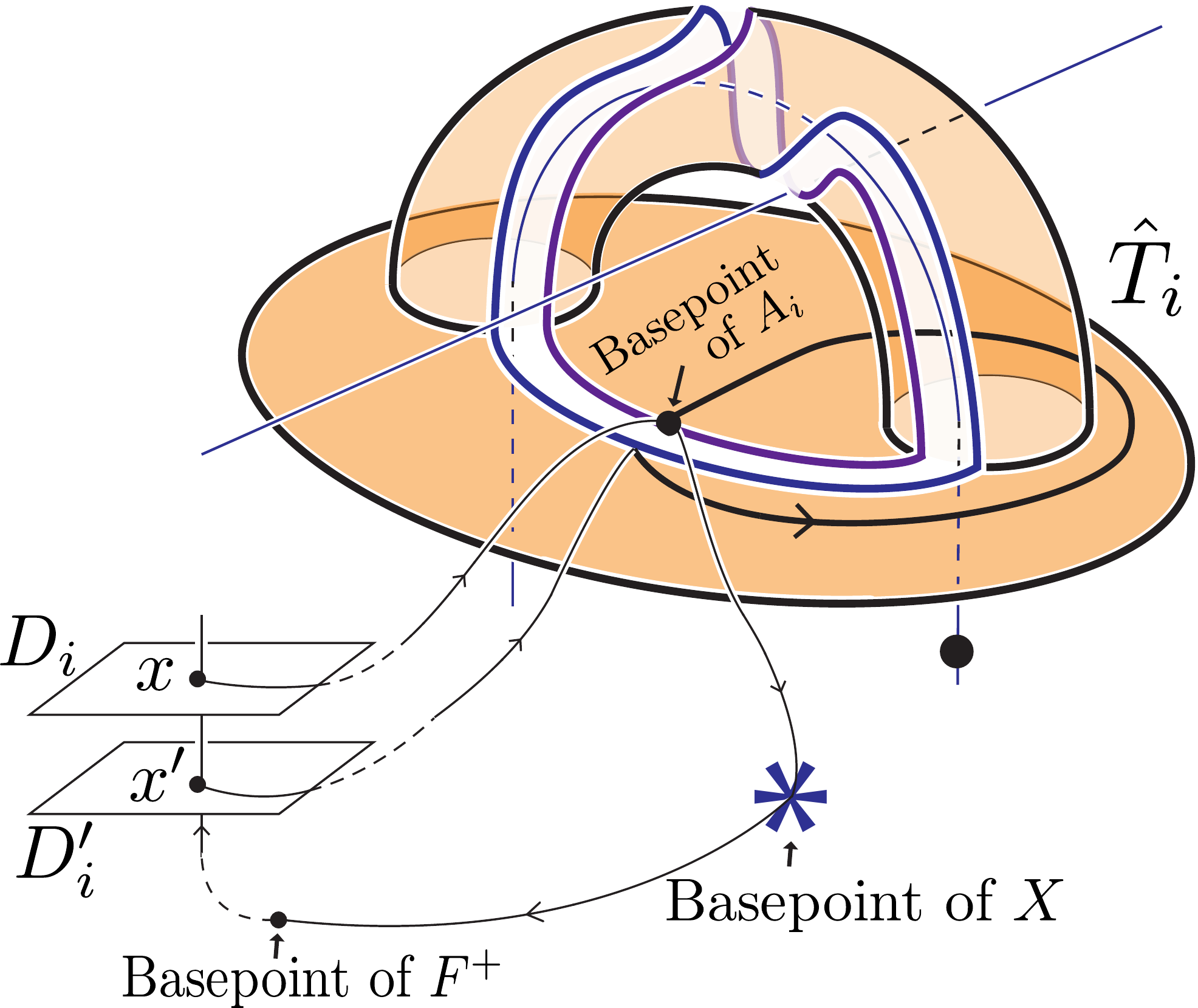}
   \caption{}
   \label{fig:torus-basepoint}
\end{figure}

Referring to Fig. \ref{fig:ver2-D1}, denote the two intersection points between $\d F_{s_1}^+$ and $\{s_1\}\times D_1$ by $\{y,y'\}$. A loop in $\d_{s_1} X$ that goes from $y$ to $y'$ along an arc in $\d F_{s_1}$, then back to $y$ along an arc in $\{s_1\}\times D_1$, links $C^-=\d F_{s_1}^{-}$ once, so we have  $\tl(F^+,D_1)=1+t$. Similarly, from Figs. \ref{fig:ver2-D2}-\ref{fig:ver2-D5} we see that $\tl(F^+,D_i)=1+t$ for $2\leq i\leq 4$, and $\tl(F^+,D_5)=2(1+t)$. We deduce that $\tl(F^+, A_i)=(1+t)^2\equiv 0$ for $1\leq i\leq 4$ and $\tl(F^+,A_5)\equiv 2(1+t)^2\equiv 0 \in \Z_2\langle t\rangle $. Thus $\Phi(s^k\lambda(F^+,A_i))=\Phi(s^k\cdot 0) = 0$ for all $k\in \Z$. Now, if $A\in \pi_2(X)$ then $A=\sum_{i=1}^{5} g_i(t) A_i$ for some $g_i(t)\in \Z[t^{\pm 1}]$, so as $\lambda$ is $\Z[t^{\pm 1}]$-bilinear  we have $\Phi(s^k\lambda(F^+,A))=0\in \Z_2[\Z_2\times \Z_2]$.\QEDA\end{proof}

\section{The Whitney Discs}

Recall from Section \ref{sec:example} that for each $1\leq i\leq 5$, the paired double points $\{p_i^+,p_i^-\}$  of $F^+$ lie in $\d F^+_{k_i}\subset \d_{k_i} X$. An embedded Whitney disc $W_i$ in $\d_{k_i} X\subset \{k_i\}\times D^3$ may be constructed for the pair $\{p_i^+,p_i^-\}$ as shown in Fig. \ref{fig:wds}.i ($1\leq i\leq 5$). As such, by Proposition \ref{prop:framing} we have that each Whitney disc $W_i$ is framed so  $\Phi(I(W_i))$ may be computed as follows.

From Fig. \ref{fig:wds-1} we see that $F^+$ meets $\int W_1$ transversely in exactly one point, $z_1$, say. Since $\d F_{k_1}^+$ and $W_1$ lie in a 3-ball away from $\d F_{k_1}^-=C^-$ (see Fig. \ref{fig:movie}.3) it is apparent that both the primary group element for $W_1$ and the secondary group element corresponding to $z_1$ are trivial in the fundamental group of $X=D^4-\nu F^-$, so $\Phi(I(W_1))=\Phi(\sign(z_1) 1\cdot 1)=1$. From Fig. \ref{fig:wds-2}, we see that $F^+$ intersects $\int W_2$ transversely in 5 points. This includes two obvious pairs of intersection points  whose signs and corresponding secondary elements are the same, so only the remaining intersection point, $z_2$ say, contributes to $\Phi(I(W_2))$. Suppose the secondary element corresponding to $z_2$ is $t^{n_2}\in \pi_1(X)$ for some $n_2\in \Z$. A loop in $\d F^+_{k_2}$ based at $p_2^+$ which leaves along one branch and returns along the other  links the dotted component $C^-=\d F_{k_2}^-$ once, so the primary group element for $W_2$ is $t^{\eps_2}\in \pi_1(X)$ (for some $\eps_2\in\{-1,1\}$). Thus $\Phi(I(W_2))=\Phi(\sign(z_2)s^{\eps_2}\cdot t^{n_2})=t$ (since $\eps_2+\eps_2n_2+n_2\equiv 1+2n_2\equiv 1 \mod 2)$. Using Figs. \ref{fig:wds-2}-\ref{fig:wds-5}, similar considerations show that $\Phi(I(W_3))= 0$ and $\Phi(I(W_4))=\Phi(I(W_5))=t$. Thus
\[
    \Phi(\tau(f^+)) = \sum_{i=1}^5 \Phi(I(W_i))= 1+t+t+t = 1+t\not\equiv 0 \in \Z_2[\Z_2\times \Z_2],
\]
which completes the proof of Theorem \ref{thm:main}.\QEDA

\section{Figures \ref{fig:movie}-\ref{fig:wds}}\label{sec:figs}


\enlargethispage{4cm}


\begin{figure}[H]
\addtocounter{figure}{1}
\renewcommand{\figurename}{Figs.}
\newcounter{myfig}
\setcounter{myfig}{\value{figure}}
\renewcommand{\thefigure}{\arabic{myfig}.1-\arabic{myfig}.4}
\centering\includegraphics[width=0.8\textwidth]{oldpk-start-ts.pdf}
\caption{}
\addtocounter{figure}{-1}
\end{figure}
\renewcommand{\figurename}{Fig.}
\manuallabel{fig:movie}{\thefigure}

\newcounter{mycount}
\setcounter{mycount}{5}
\renewcommand{\thefigure}{\arabic{myfig}.\arabic{mycount}}
\begin{figure}[H]
\centering
\includegraphics[width=0.7\textwidth]{oldpk18-ts.pdf}
\caption{}\label{}
\addtocounter{mycount}{1}
\end{figure}

\newpage
\enlargethispage{4cm}
%

\setlength{\jump}{-0.2cm}
\begin{figure}[H]
\centering
\includegraphics[width=0.8\textwidth]{oldpk-b-1-ts.pdf}
\caption{}\label{}
\addtocounter{mycount}{1}
\end{figure}
\vskip \jump
\begin{figure}[H]
\centering
\includegraphics[width=0.8\textwidth]{oldpk-b-3-ts.pdf}
\caption{}\label{}
\addtocounter{mycount}{1}
\end{figure}
\vskip \jump
\begin{figure}[H]
\centering
\includegraphics[width=0.8\textwidth]{oldpk-b-10-ts.pdf}
\caption{}\label{}
\addtocounter{mycount}{1}
\end{figure}

\newpage
\enlargethispage{4cm}

\setlength{\jump}{-0.2cm}
\begin{figure}[H]
\centering
\includegraphics[width=0.8\textwidth]{oldpk-b-11-ts.pdf}
\caption{}\label{}
\addtocounter{mycount}{1}
\end{figure}
\vskip \jump
\begin{figure}[H]
\centering
\includegraphics[width=0.8\textwidth]{oldpk-b-13-ts.pdf}
\caption{}\label{}
\addtocounter{mycount}{1}
\end{figure}
\vskip \jump
\begin{figure}[H]
\centering
\includegraphics[width=0.8\textwidth]{oldpk-b-16-ts.pdf}
\caption{}\label{}
\addtocounter{mycount}{1}
\end{figure}

\newpage
\enlargethispage{4cm}
\setlength{\jump}{-0.2cm}
\begin{figure}[H]
\centering
\includegraphics[width=0.8\textwidth]{oldpk-b-18-ts.pdf}
\caption{}\label{}
\addtocounter{mycount}{1}
\end{figure}
\vskip \jump
\begin{figure}[H]
\centering
\includegraphics[width=0.8\textwidth]{oldpk-b-22-ts.pdf}
\caption{}\label{}
\addtocounter{mycount}{1}
\end{figure}
\vskip \jump
\begin{figure}[H]
\centering
\includegraphics[width=0.8\textwidth]{oldpk-b-24-ts.pdf}
\caption{}\label{}
\addtocounter{mycount}{1}
\end{figure}

\newpage
\enlargethispage{3cm}

\setlength{\jump}{-0.2cm}
\begin{figure}[H]
\centering
\includegraphics[width=0.8\textwidth]{oldpk-b-27-ts.pdf}
\caption{}\label{}
\manuallabel{fig:movie-secondlast}{\themycount}
\addtocounter{mycount}{1}
\end{figure}
\vskip \jump
\begin{figure}[H]
\centering
\includegraphics[width=0.8\textwidth]{oldpk-b-33-ts.pdf}
\caption{}\label{fig:movie-last}
\addtocounter{mycount}{1}
\end{figure}
\vskip \jump


\addtocounter{myfig}{1}
\manuallabel{fig:discs}{\themyfig}
\setcounter{mycount}{1}
\begin{figure}[H]
\centering
\includegraphics[width=0.8\textwidth]{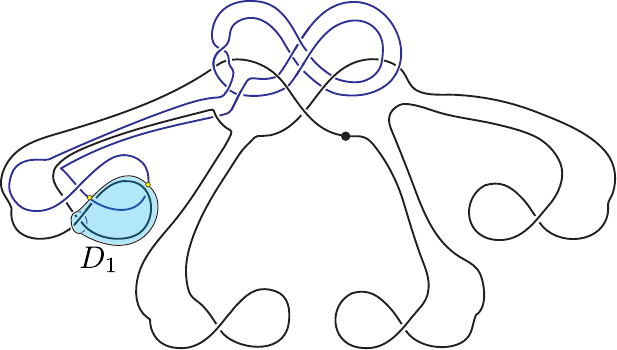}
\caption{$\d_{s_1}X$}\label{fig:ver2-D1}
\addtocounter{mycount}{1}
\end{figure}

\newpage
\enlargethispage{3cm}

\setlength{\jump}{-0.2cm}
\begin{figure}[H]
\centering
\includegraphics[width=0.8\textwidth]{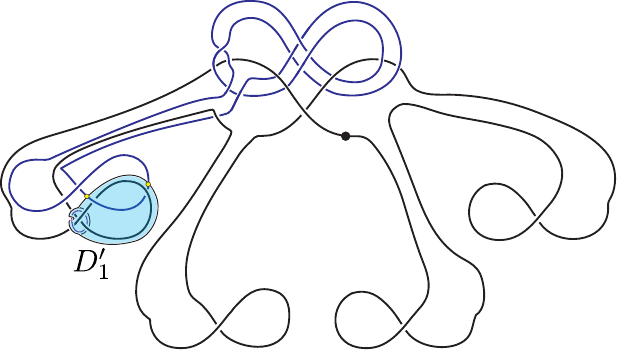}
\caption{}\label{fig:ver2-D1par}
\addtocounter{mycount}{1}
\end{figure}
\vskip \jump
\begin{figure}[H]
\centering
\includegraphics[width=0.8\textwidth]{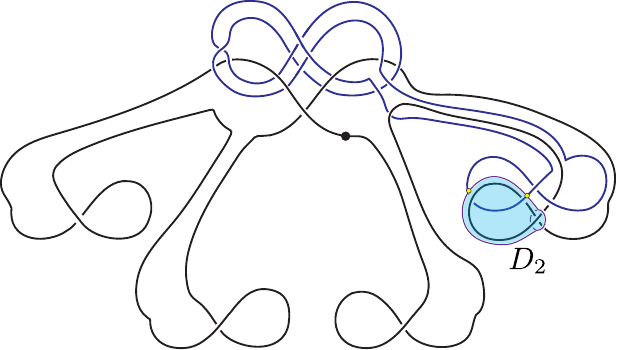}
\caption{}\label{fig:ver2-D2}
\addtocounter{mycount}{1}
\end{figure}
\vskip \jump
\begin{figure}[H]
\centering
\includegraphics[width=0.8\textwidth]{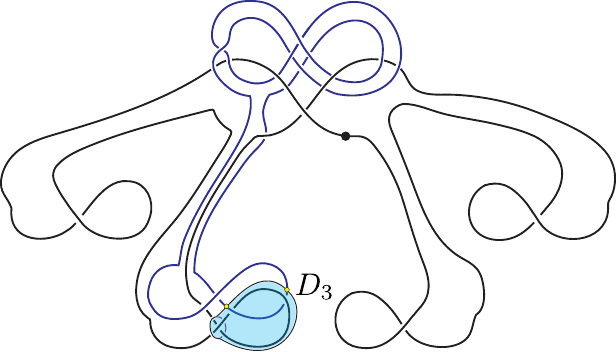}
\caption{}\label{fig:ver2-D2}
\addtocounter{mycount}{1}
\end{figure}

\newpage
\enlargethispage{3cm}

\setlength{\jump}{-0.2cm}

\begin{figure}[H]
\centering
\includegraphics[width=0.8\textwidth]{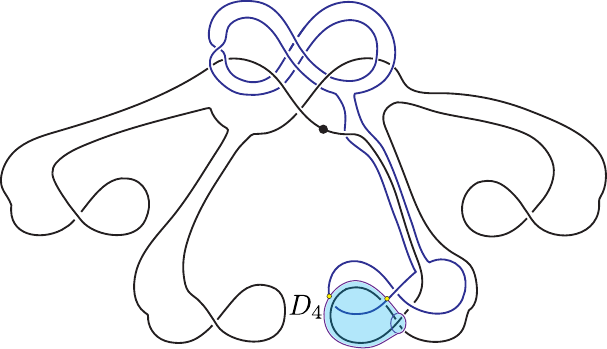}
\caption{}\label{fig:ver2-D4}
\addtocounter{mycount}{1}
\end{figure}
\vskip \jump
\begin{figure}[H]
\centering
\includegraphics[width=0.8\textwidth]{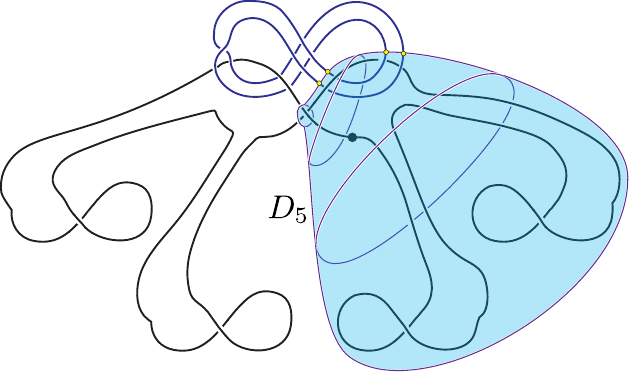}
\caption{}\label{fig:ver2-D5}
\addtocounter{mycount}{1}
\end{figure}

\addtocounter{myfig}{1}
\manuallabel{fig:wds}{\themyfig}
\setcounter{mycount}{1}

\begin{figure}[H]
\centering
\includegraphics[width=0.4\textwidth]{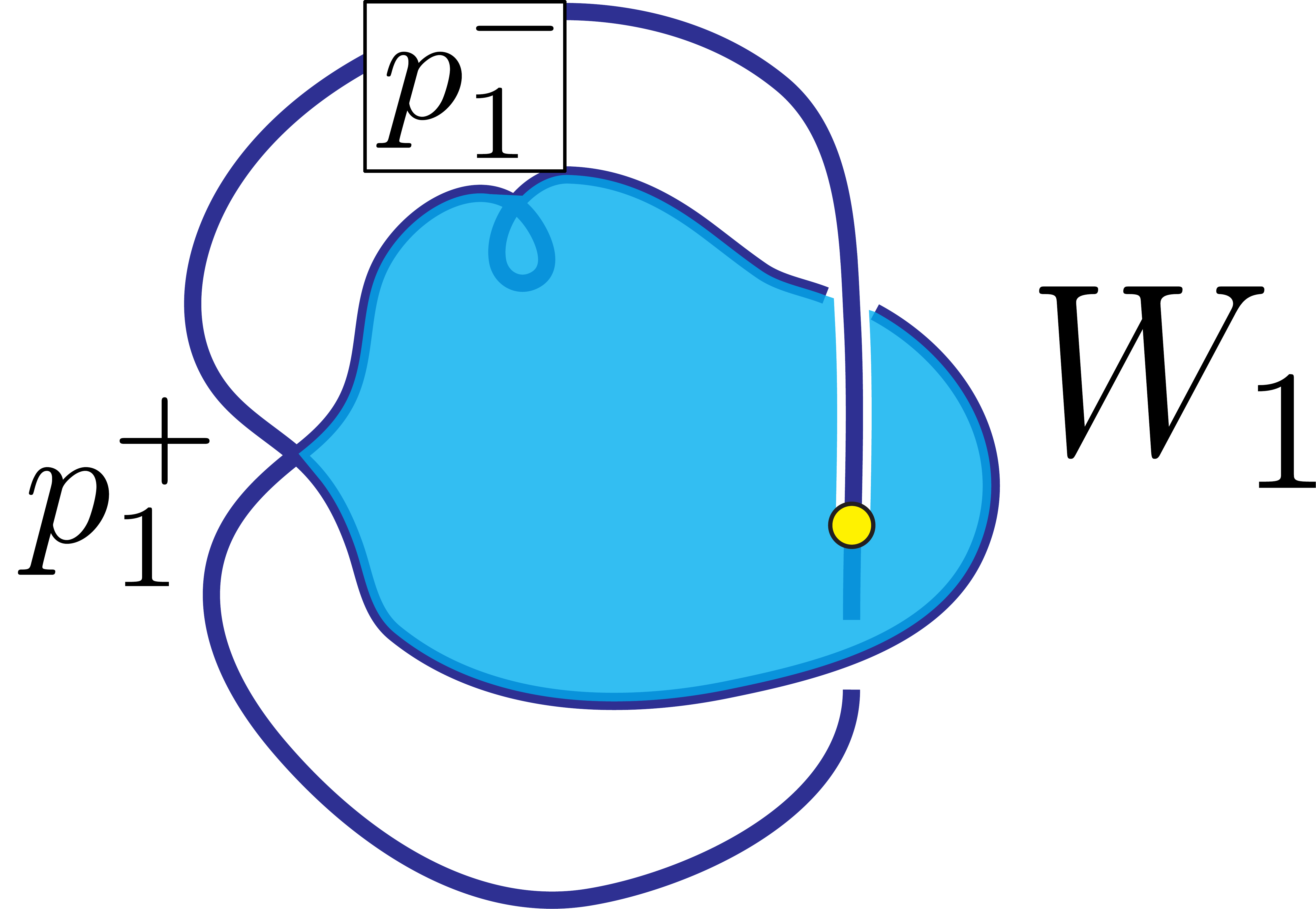}
\caption{}\label{fig:wds-1}
\addtocounter{mycount}{1}
\end{figure}

\newpage
\enlargethispage{3cm}

\setlength{\jump}{-0.2cm}
\begin{figure}[H]
\centering
\includegraphics[width=0.82\textwidth]{wd2-ts.pdf}
\caption{}\label{fig:wds-2}
\addtocounter{mycount}{1}
\end{figure}
\vskip \jump
\begin{figure}[H]
\centering
\includegraphics[width=0.82\textwidth]{wd3-ts.pdf}
\caption{}\label{fig:wds-3}
\addtocounter{mycount}{1}
\end{figure}
\vskip \jump
\begin{figure}[H]
\centering
\includegraphics[width=0.82\textwidth]{wd4-ts.pdf}
\caption{}\label{fig:wds-4}
\addtocounter{mycount}{1}
\end{figure}

\newpage

\begin{figure}[H]
\centering
\includegraphics[width=0.82\textwidth]{wd5-ts.pdf}
\caption{}\label{fig:wds-5}
\addtocounter{mycount}{1}
\end{figure}

\section{Proof of two propositions}\label{sec:proofs}

\begin{proof}[Proof of Proposition \ref{prop:w2}]
\vskip -0.25cm
Represent $A$ by an immersed 2-sphere $S^2\overset{A}{\looparrowright} X$. Since the 4-manifold $X$ has the Kirby diagram shown in Fig. \ref{fig:kirby-label}, in which all the 2-handles are even-framed, we have $\omega_2(TX)=0$ by \cite{GS} Corollary 5.7.2. As $TS^2 \oplus \nu_A = A^\ast(TX)$ and $\omega_1(TS^2)=0=\omega_2(TS^2)$, by \cite{DK} Theorem 10.39 we have
\[
\omega_2(\nu_A) = \omega_2(A^\ast(TX)) = A^\ast\omega_2(TX)=0.
\]
\QEDA\end{proof}

\begin{proof}[Proof of Proposition \ref{prop:pi2}]
Orient the core of each handle of $Y$ and let $Y^{(k)}$ denote the subhandlebody of $Y$ consisting of its handles of index up to and including $k$. Let $t$ denote the homotopy class of the image $e^1/e^0=S^1$ of the 1-handle under the collapsing map $Y^{(1)}\to Y^{(1)}/Y^{(0)}\sim e^1/e^0$  so that $\pi_1(Y)\cong\Z\langle t\rangle$. The corresponding cellular chain complex for  $Y$ is then
\[
    0\to  \overset{n}{\underset{i=1}{\oplus}}\Z\langle e_i^2\rangle \xrightarrow{\d_2} \Z\langle e^1\rangle \xrightarrow{\d_1} \Z\langle e^0\rangle \to 0.
\]
Since $\pi_1(Y)=\Z$, we have $\d_2=0$ and so $H_2(Y)= \ker \d_2/\Im \d_3 \cong \Z^{n}$, and for  each $1\leq i\leq n$ the attaching circle $L_i$ bounds a continuous 2-disc $D_i$ in $Y^{(0)}$. Let $f_i:S^2\to Y^{(2)}$ be a continuous map with image $\hat{A}_i := e_i^2 \underset{L_i}{\cup} D_i$. Upon collapsing $Y=Y^{(2)}$ to a wedge of 2-spheres via $Y^{(2)}\to Y^{(2)}/Y^{(1)} {\sim} \overset{n}{\underset{i=1}{\vee}} (S^2_i=e_i^2/\d e_i^2)$, each 2-sphere $\hat{A}_i$ is mapped precisely to $S^2_i$, so its homology class $[\hat{A}_i]$ maps to the generator $[e_i^2]$ under the isomorphism $H_2(Y)\cong H_2(Y_2/Y_1)\cong C_2(Y)$. Thus $[\hat{A}_1], \ldots, [\hat{A}_n]$ is a $\Z$-basis for $H_2(Y)=\Z^n$. Let $\tY \xrightarrow{p} Y$ be the universal cover and choose (canonically oriented) lifts $\te^k$ in $\tY$ of the oriented cores $e^k$. The induced cellular chain complex of $\tY$ is then
\[
    0\to \overset{n}{\underset{i=1}{\oplus}}\Z[t^{\pm 1}]\langle \te_i^2\rangle \xrightarrow{\td_2} \Z[t^{\pm 1}]\langle \te^1\rangle \xrightarrow{\td_1} \Z[t^{\pm 1}]\langle \te^0\rangle \to 0.
\]
As $\td_1$ is given by $\td_1(\te^1) = (t-1)\te^0$, it is injective and so $\td_2=0$. Hence $H_2(\tY) = \ker \td_2/\Im \td_3 \cong  (\Z[\Z])^{n}$, and $[\te_1],\ldots, [\te_n]$ is a $\Z[t^{\pm 1}]$-basis. Using the isomorphism $p_\ast: \pi_2(\tY)\to \pi_2(Y)$ and the Hurewisc isomorphism  $\rho: \pi_2(\tY)\to H_2(\tY)$ we obtain
\[
    \pi_2(Y)\cong \pi_2(\tY)\cong H_2(\tY) = (\Z[\Z])^n.
\]
This proves part (i).

Now, a map $S^2\to X$ with image $A_i$ as in the hypotheses of part \ref{pi2:3} is homotopic to $f_i$ (as defined above) so we may assume that $A_i=\hat{A_i}$. Let $\tw{L}_i:=\d \te_i$ and let $\tw{D}_i$ be the unique lift of $D_i$ in $\tY$ that has boundary $\d \tw{D}_i=\tw{L}_i$. As $D_i\subset Y^{(0)}$, we have $\tw{D_i}\subset (\tY)^{(0)}$. Thus, as before,
\[
    \tA_i := \te_i^2 \underset{\tw{L}_i}{\cup} \tw{D}_i
\]
is a continuous 2-sphere in $\tY$ such that $[\tA_i]=[\te_i] \in H_2(\tY) \cong C_2(\tY)$, so $\tA_1,\ldots, \tA_n$ is a $\Z[\Z]$-basis for $\pi_2(Y)\overset{p_\ast^{-1}}{\cong} \pi_2(\tY)\overset{\rho^{-1}}{\cong} H_2(\tY)$.  Part \ref{pi2:3} then follows since $p(\tA_i)=A_i$ for each $1\leq i\leq n$.\QEDA\end{proof}

\bibliographystyle{plain}
\bibliography{bib1}

\end{document}